\title{A Mixed-FEM approximation with uniform conservation of the exponential stability for a class of anisotropic port-Hamiltonian system and its application to LQ control.}
\author{Luis Mora, Kirsten Morris.}
\date{May 2024}
\newcommand{\T}{\top}
\newcommand{\disp}{\displaystyle}
\newcommand{\inner}[1]{\left\langle #1 \right\rangle}
\newcommand{\x}{x}
\newcommand{\pd}[2]{\frac{\partial #1}{\partial #2}}
\renewcommand{\L}{\Theta}
\newcommand{\R}{\mathbb{R}}
\newcommand{\eig}{ {\rm eig}}
\renewcommand{\d}{{\rm d}}
\newcommand{\qv}{\boldsymbol{q}}
\newcommand{\pv}{\boldsymbol{p}}
\newcommand{\ev}{\boldsymbol{e}}
\newcommand{\bp}{\theta}
\newcommand{\bpi}{\overline{\bp}}
\newcommand{\ap}{a}
\newcommand{\api}{\overline{\ap}}
\newcommand{\uv}{\boldsymbol{u}}
\newcommand{\Ha}{\mathcal{H}}
\newcommand{\tr}{\boldsymbol{t}_r}
\newcommand{\xx}{\mathrm{z}}
\newcommand{\xl}{x_l}
\newcommand{\xr}{x_r}
\newcommand{\ub}{\mathbf{u}}
\newcommand{\vb}{\mathbf{v}}
\newcommand{\M}{M}
\newcommand{\D}{D}
\newcommand{\C}{C}
\newcommand{\A}{A}
\newcommand{\W}{W}
\newcommand{\B}{B}
\newcommand{\Lh}{\bar{\L}}
\newcommand{\Mh}{\bar{\M}}
\newcommand{\Dh}{\bar{\D}}
\newcommand{\Bh}{\bar{\B}}
\newcommand{\Ch}{\bar{\C}}
\newcommand{\Wh}{\bar{\W}}
\newcommand{\Ah}{\bar{\A}}
\newcommand{\dirac}[1]{\boldsymbol{\delta}\left(\x - #1\right)}
\newtheorem{lemma}{Lemma}
\newtheorem{definition}{Definition}
\newtheorem{assump}{Assumption}
\newtheorem{remark}{Remark}
\newtheorem{theorem}{Theorem}
\newtheorem{prop}{Proposition}
\begin{document}

\maketitle

\begin{abstract}
	In this manuscript, we present a mixed finite element discretization for a class of boundary-damped anisotropic port-Hamiltonian systems. Using a multiplier method, we demonstrate that the resulting approximation model uniformly preserves the exponential stability of the uncontrolled system, establishing a lower bound for the exponential decay rate that is independent of the mesh size. This property is illustrated through the spatial discretization of a piezoelectric beam. Furthermore, we show how the uniform preservation of exponential stability by the proposed model aids in the convergence of controllers derived from an infinite-time linear quadratic control design, in comparison to models obtained from the standard finite-element method. 
\end{abstract}

\section{Introduction.}

 The conservation of the exponential stability independently of the mesh size, uniform conservation, is a desired property for the numerical approximation models of distributed-parameter systems. The relevance of this property lies not only in mitigating the effects of spurious oscillations during simulations, as happen in standard finite differences approximations of the wave equation, but also in the conservation of properties as the exact controllability and observability´that are relevant in the design of controllers and estimators \cite{Morris2020}. For example, in the linear quadratic control for partial differential equations with an infinite time horizon, the design is based on numerical approximations of the systems, and the sequence of designed controllers could not converge as the approximation order is increased and may not even stabilize the original system. The key sufficient condition for the controllers convergence and the satisfactory performance of the feedback system is the uniform exponential stability of the numerical model \cite{Morris1994,Morris2020}.
 
 Unfortunately, not all discretization schemes can conserve uniformly the exponential stability of any distributed-parameter system. For example, standard finite-differences (FD) and finite-elements (FE) methods does not conserve the stability properties of the boundary-damped wave equation, as shown in \cite{Banks1991,Delaunay2024}. In the last 30 years, several modifications of the finite-differences and finite-elements methods have been proposed to guarantee a uniform exponential stability conservation. Approaches based on numerical viscosity terms are proposed in \cite{Delaunay2024,Munch2007,Ramdani2007,Zuazua2012}, based on order-reduction methods in \cite{Liu2020,Ren2022,Guo2020,Liu2022,Wang2023}, and using average operators in  \cite{Ozer2024,Zhang2024}. Additionally, approaches based on mixed finite-elements with uniform exponential decay can be found in \cite{Banks1991,Abdallah2013,Egger2019,Egger2018,Egger2024,DelReyFernandez2023}, modal methods in \cite{Liu1994}, and continuous Galerkin in \cite{Fabiano2001}.
 
  On the other hand, the port-Hamiltonian systems is a framework focused on the system energy flux and the description complex interconnected processes, including those characterized by ordinary and partial differential equations, linear or nonlinear, \cite{Jacob2012,LeGorrec2005,vdS14}. Between the advantages of this framework is that the well-posedness of boundary-controlled distributed-parameter systems can be done by a simple rank analyzing of a set of matrices that define the boundary variable relations \cite{Zwart2010,Jacob2012}. Similarly, these matrices are used to study the system exponential stability, see \cite{Augner2014,Villegas2009a,Trostorff2022} for details, however, these papers do not provide the decay rate. In \cite{Mora2023} the multiplier method was used to obtain a lower bound on the exponential decay rate in terms of the physical parameters for a class anisotropic port-Hamiltonian systems  with boundary dissipation.
 The discretization schemes of distributed-parameter port-Hamiltonian systems center on the conservation of the port-Hamiltonian structure, i.e., the finite-dimensional approximation model is also port-Hamiltonian and conserves the energy flux structure between the system components. In the literature we can find schemes based on finite-differences \cite{Trenchant2018}, finite volumes \cite{Kotyczka2016}, partitioned finite-elements \cite{Serhani2019a,Serhani2019b,Haine2023}, mixed finite-elements \cite{DelReyFernandez2023,Golo2004,Wu2015}, discontinuous Galerkin methods \cite{Thoma2023}, and pseudo-spectral methods \cite{Moulla2012}. The uniform stability conservation by these discretization schemes is commonly studied evaluating numerically the eigenvalues of the obtained finite-dimensional model \cite{Moulla2012, Thoma2023, Serhani2019b}.
 
 Other challenge for the analysis of the approximation schemes is the choose of  the method used to study the uniform conservation of the exponential stability. For example, the eigenvalue method can be used to obtain an analytic expression of the boundary-damped wave equation \cite{Cox1995}, however, for the discretized model, obtaining an analytic formula for the eigenvalues of generic matrices with arbitrary size is a complex task. Alternative approaches based on a frequency analysis were used in \cite{Liu1994,Wang2023}, among others. The multiplier methods are a powerful tool to obtain an explicit bound for the exponential decay rate of ordinary and partial differential equations \cite{Tucsnak2009,Komornik1994} and their numerical approximations \cite{Egger2024,Zhang2024,Ozer2024,Guo2020}.
 
 In this work, we present a mixed finite-element method (MFEM) the spatial discretization of a class of anisotropic distributed-parameters port-Hamiltonian system with uniform conservation of the exponential stability properties and its application on the linear quadratic control design. To preserves the port-Hamiltonian structure, we express the dynamical system in terms of the co-energy variables \cite{vdS14,Serhani2019a}. Then, the discretization scheme using linear piecewise bases and constant piecewise test functions is applied. For the exponential stability analysis, we use the multiplier method described in \cite{Tucsnak2009}. This method is useful to obtain an explicit expression of the exponential decay for distributed-parameter port-Hamiltonian systems \cite{Mora2023} and numerical approximations \cite{DelReyFernandez2023}. We use a piezoelectric beam model as example of the uniform conservation of the exponential stability analysis. Finally, using the wave equation, we show how the control law obtained from a linear quadratic controller design based on the MFEM model proposed, converges to a control law expressed in terms of the continuous system variables. 

\section{Distributed-parameters systems anisotropic port-Hamiltonian systems.}\label{sec:PHS}

This work considers a class of anisotropic distributed-parameters port-Hamiltonian systems on interval $[\xl,\xr]$. We separate the state variables into two vectors of size $n\in\mathbb{N}$,  $\qv(\x,t)=\begin{bmatrix}
    q_1(\x,t),\dots, q_n(\x,t)
\end{bmatrix}^\top \in H^{1}(\xl,\xr;\mathbb{R}^{n})$ and $\pv(\x,t)=\begin{bmatrix}
    p_1(\x,t),\dots, p_n(\x,t)
\end{bmatrix}^\top \in H^{1}(\xl,\xr;\mathbb{R}^{n})$. Similarly, the space-varying physical parameters are grouped in two sets $\{\bp_i^{q}(\x)\}_{i=1}^{n}$ and $\{\bp_i^{p}(\x)\}_{i=1}^{n} $, strictly positive on $C^1 (\xl,\xr)$.  Then, the system dynamics are expressed as:
\begin{align}
	&\pd{}{t} \begin{bmatrix}
        \qv(\x,t)\\ \pv(\x,t)
    \end{bmatrix}-\underset{P_1}{\underbrace{\begin{bmatrix}
        0 & A\\A^\top & 0
    \end{bmatrix}}}\pd{ }{\x} \left(\underset{\L(\x)}{\underbrace{\begin{bmatrix}
        \L_{q}(\x) & 0\\ 0 & \L_p(\x)
    \end{bmatrix}}}  \begin{bmatrix}
        \qv(\x,t)\\ \pv(\x,t)
    \end{bmatrix}\right)=\begin{bmatrix}
		    B_q(\x) \\ B_p (\x)
		\end{bmatrix} \uv(t), \quad \x\in[\xl,\xr], \label{eq:PHS_2}\\
  &A^\top\L_q(\xr)\qv(\xr,t)=-K\L_p(\xr)\pv(\xr,t),\label{eq:BC_b} \\
		&\L_p(\xl)\pv(\xl,t) =0,
        \label{eq:BC_a}
\end{align}
where $\ub(t)\in\mathbb{R}^l$ denotes control signal, $A \in \mathbb{R}^{n \times n}$ is invertible, $\L_q (\x)=diag\left(\bp^{q}_1(\x),\dots, \bp^{q}_{n}(\x)\right)$, $\L_p (\x)=diag\left(\bp^{p}_1(\x),\dots, \bp^{p}_{n}(\x)\right)$, $B_q(\x),B_p(\x) \in \mathcal{L}\left(\R^{l},L^2(\xl,\xr;\R^n)\right)$, $K=K^\top >0 \in\mathbb{R}^{n \times n}$ is the boundary dissipation matrix. Letting $\inner{\cdot, \cdot}_{L^2}$ indicate the usual inner product on $L^2(\xl,\xr),$ the system energy is
\begin{align}
    \Ha(t)=\frac{1}{2}\inner{\begin{bmatrix}
        \qv(\x,t)\\ \pv(\x,t)
    \end{bmatrix},\L(\x) \begin{bmatrix}
        \qv(\x,t)\\ \pv(\x,t)
    \end{bmatrix}}_{L^2}.
\end{align}
It is straightforward to verify that 
\begin{align}
    \dot{\Ha}(t)=&- \pv^\top(\xr,t)\L_p(\xr)K\L_p(\xr)\pv(\xr,t) +\inner{\L(\x) \begin{bmatrix}
        \qv(\x,t)\\ \pv(\x,t)
    \end{bmatrix}, \begin{bmatrix}
		    B_q(\x) \\ B_p (\x)
		\end{bmatrix} \uv(t)}_{L^2}.
\end{align}
and so the system is dissipative; that is, without external stimulus the energy is non-increasing along trajectories. 
The exponential stability of this system without control, $\uv(t)=0$ can be analyzed using the approach in \cite{Mora2023}, if the physical parameters satisfy the following assumption.

\begin{assump}\label{assump:1}
    Defining $m(\x)=\x-\xl , $ there is a constant $\delta^c>0$ such that for every $i=1\ldots n,\; \xl \leq x \leq \xr ,$
    \begin{equation}
    \begin{array}{ll}
        \theta^p_i(\x)-m(\x)(\theta_i^p)^\prime(\x)& >\delta^c \theta^p_i(\x), 
        \\
          \theta^q_i(\x)-m(\x)(\theta_i^q)^\prime(\x)& >\delta^c \theta^q_i(\x).
    \end{array}
    \label{eq:1A}
      \end{equation}
\end{assump}

Then, a lower bound for the exponential decay rate $\alpha$ can be obtained using the next Theorem.
\begin{theorem}
 \cite[Theorem 1]{Mora2023} Consider $\uv(t)=0$. If Assumption \ref{assump:1} holds, 
 then, defining 
\begin{align}
 \epsilon_0=\dfrac{\eta_{\L}}{\ell \mu_{P_1}},\quad    \epsilon_1=\dfrac{2\eta_K}{\ell \mu_\Psi}, \label{eq:epsilonc} 
\end{align}
and with $\mu_{\Psi}= \max eig(\Psi)$, $\Psi=\begin{bmatrix}
    A^{-\top} K\\I_n
\end{bmatrix}^\top \L^{-1}(\xr) \begin{bmatrix}
    A^{-\top} K\\I_n
\end{bmatrix}$, $\eta_K=\min eig(K)$, $\disp\eta_\L=\min_{x\in[\xl,\xr]} eig(\L(\x)) $, and $\mu_{P_1}=\sqrt{\max eig(P_1^{-2})},$ 
 system \eqref{eq:PHS_2}--\eqref{eq:BC_a} is exponentially stable with a decay rate bounded by
\begin{align*}
    \alpha=\dfrac{\delta^c \epsilon \epsilon_0}{\epsilon+\epsilon_0} , \quad\forall \epsilon\in (0,\min \{\epsilon_0,\epsilon_1\}].   
\end{align*}
\end{theorem}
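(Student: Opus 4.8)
\medskip
\noindent\emph{Proof plan.} The plan is to run a multiplier (energy--perturbation) argument in the spirit of \cite{Tucsnak2009}, exactly as in \cite{Mora2023}. Collect the state as $\zb=\begin{bmatrix}\qv^\T & \pv^\T\end{bmatrix}^\T$, so that \eqref{eq:PHS_2} with $\uv=0$ reads $\partial_t\zb=P_1\,\partial_\x(\L(\x)\zb)$, and introduce the co-energy variable $\ev(\x,t)=\L(\x)\zb(\x,t)$. Note $P_1=P_1^\T$ is invertible (since $A$ is), with $P_1^{-1}=\begin{bmatrix}0 & A^{-\T}\\ A^{-1} & 0\end{bmatrix}$ and $\mu_{P_1}=\sqrt{\max\eig(P_1^{-2})}=\|P_1^{-1}\|_2$, and recall from the excerpt that $\dot\Ha(t)=-\xi^\T K\xi\le -\eta_K|\xi|^2$ with $\xi:=\L_p(\xr)\pv(\xr,t)$. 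Everything below is first done for classical solutions (data in the domain of the generator, which is well posed and dissipative by \cite{Zwart2010,Jacob2012}) and extended to finite-energy data by density. As multiplier functional I take
\begin{align*}
F(t)=\tfrac12\inner{\zb(\x,t),\,m(\x)\,P_1^{-1}\zb(\x,t)}_{L^2}.
\end{align*}
The first task is the equivalence estimate: from $|\zb^\T P_1^{-1}\zb|\le\|P_1^{-1}\|_2|\zb|^2$, $0\le m(\x)\le\ell:=\xr-\xl$, and $\Ha(t)\ge\tfrac12\eta_\L\|\zb(\cdot,t)\|^2_{L^2}$, one obtains $|F(t)|\le\tfrac{\ell\mu_{P_1}}{\eta_\L}\Ha(t)=\epsilon_0^{-1}\Ha(t)$.

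Next I differentiate $F$ along trajectories. Using $\partial_t\zb=P_1\partial_\x\ev$, the self-adjointness of the multiplication operator $m(\x)P_1^{-1}$, and $P_1^\T P_1^{-1}=I$, the cross terms telescope to $\dot F(t)=\inner{\partial_\x\ev,\,m(\x)\L^{-1}(\x)\ev}_{L^2}$. Integrating by parts with $m'\equiv 1$, inserting $(\L^{-1})'=-\L^{-1}\L'\L^{-1}$, and using $\ev^\T\L^{-1}\ev=\zb^\T\L\zb$, this should collapse to
\begin{align*}
\dot F(t)=\tfrac12\big[\,m(\x)\,\ev^\T\L^{-1}(\x)\ev\,\big]_{\x=\xl}^{\x=\xr}-\tfrac12\inner{\zb,\,(\L(\x)-m(\x)\L'(\x))\,\zb}_{L^2}.
\end{align*}
The boundary term at $\xl$ vanishes because $m(\xl)=0$; at $\xr$, \eqref{eq:BC_b} gives $\L_q(\xr)\qv(\xr,t)=-A^{-\T}K\xi$, so $\ev(\xr,t)=\begin{bmatrix}-A^{-\T}K\\ I_n\end{bmatrix}\xi$ and hence (the sign of the top block being irrelevant in the quadratic form) $m(\xr)\,\ev(\xr)^\T\L^{-1}(\xr)\ev(\xr)=\ell\,\xi^\T\Psi\xi\le\ell\mu_\Psi|\xi|^2$. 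For the volume term, Assumption \ref{assump:1} states precisely that the diagonal matrix $\L(\x)-m(\x)\L'(\x)$ dominates $\delta^c\L(\x)$ pointwise on $[\xl,\xr]$, so $\inner{\zb,(\L-m\L')\zb}_{L^2}\ge 2\delta^c\Ha(t)$, whence $\dot F(t)\le-\delta^c\Ha(t)+\tfrac{\ell\mu_\Psi}{2}|\xi|^2$.

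Finally I form $V_\epsilon(t)=\Ha(t)+\epsilon F(t)$ and combine the two estimates:
\begin{align*}
\dot V_\epsilon(t)\le-\epsilon\delta^c\Ha(t)+\Big(\tfrac{\epsilon\ell\mu_\Psi}{2}-\eta_K\Big)|\xi|^2 .
\end{align*}
The boundary contribution is nonpositive once $\epsilon\le 2\eta_K/(\ell\mu_\Psi)=\epsilon_1$, leaving $\dot V_\epsilon\le-\epsilon\delta^c\Ha$. For $\epsilon<\epsilon_0$ the equivalence estimate gives $(1-\epsilon/\epsilon_0)\Ha\le V_\epsilon\le(1+\epsilon/\epsilon_0)\Ha$, hence $\Ha\ge\tfrac{\epsilon_0}{\epsilon_0+\epsilon}V_\epsilon\ge 0$ and $\dot V_\epsilon\le-\tfrac{\delta^c\epsilon\epsilon_0}{\epsilon+\epsilon_0}V_\epsilon$; integrating this differential inequality and undoing the equivalence yields $\Ha(t)\le\tfrac{\epsilon_0+\epsilon}{\epsilon_0-\epsilon}\,e^{-\alpha t}\,\Ha(0)$ with $\alpha=\tfrac{\delta^c\epsilon\epsilon_0}{\epsilon+\epsilon_0}$, i.e. the asserted exponential decay for every such $\epsilon$ (the closed endpoint being recovered by a supremum/continuity argument when $\min\{\epsilon_0,\epsilon_1\}=\epsilon_0$). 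The main obstacle is not any single computation but the choice of $F$: one needs a weighted quadratic functional that is simultaneously (i) controlled by $\Ha$ with the sharp constant $\epsilon_0^{-1}$, (ii) produces, via integration by parts, a bulk term of the exact form $\inner{\zb,(\L-m\L')\zb}_{L^2}$ so that Assumption \ref{assump:1} applies verbatim, and (iii) leaves a boundary term in $|\xi|^2$ that $\dot\Ha$ can absorb when $\epsilon\le\epsilon_1$; the weight $m(\x)P_1^{-1}$ works precisely because $P_1^{-1}\partial_t\zb=\partial_\x\ev$ turns the time derivative into the co-energy gradient and $P_1^\T P_1^{-1}=I$ cleans up the pairing, while the factor $\tfrac12$ is what pins the constants to the stated $\epsilon_0,\epsilon_1$.
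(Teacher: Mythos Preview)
Your proof is correct and follows exactly the multiplier argument the paper attributes to \cite{Mora2023}: the functional $F(t)=\tfrac12\inner{\zb,m(\x)P_1^{-1}\zb}_{L^2}$ is the continuous counterpart of the discrete $V_\epsilon$ in \eqref{eq:Ve} (the paper says explicitly that $\Wh$ plays the role of $m(\x)$), and your verification of (C1)--(C2) mirrors the structure of the proof of Theorem~\ref{thm:1}. Note that the paper itself does not prove this statement---it is quoted from \cite{Mora2023}---so there is nothing further to compare; your endpoint remark for $\epsilon=\epsilon_0$ is the only place that is slightly informal, but the decay-rate claim is about the exponent and follows by continuity of $\alpha(\epsilon)$.
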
 

To reduce repetition, the notation $\xx$, where  $\xx=p$ or $q$, will be used in definitions and calculations applicable to either states $\pv$ or $\qv$ of system \eqref{eq:PHS_2}.

For a system with anisotropic physical parameters, a direct discretization of the state variables $\pv,\qv$ does not necessarily lead to a port-Hamiltonian structure in the numerical approximation. To solve this issue, several researchers reformulate the system using the co-energy variables, also called ``efforts", as state variables 
\cite{Serhani2019b,Serhani2019a}.
The co-energy variable $e^{\xx}_i$ is defined as the variational derivative of $\Ha(t)$ with respect to the state $\xx_i$, that is, $e^{\xx}_i(\x,t)=\delta_{\xx_i}\Ha(t)$. For system \eqref{eq:PHS_2}--\eqref{eq:BC_a} we obtain $e_{i}^{\xx}(\x,t)=\bp_{i}^{\xx}(\x)\xx_i(\x,t)$, that is, the co-energy vector corresponding to state $\xx(\x,t)$ is $\ev^{\xx}(\x,t)=\begin{bmatrix}
       e^{\xx}_1(\x,t) & \dots & e^{\xx}_n(\x,t) 
   \end{bmatrix}^\top=\L_{\xx}(\x)\xx(\x,t)$. Consequently, system \eqref{eq:PHS_2}--\eqref{eq:BC_a} is rewritten as
   \begin{align}
               \L^{-1}(\x)\pd{}{t}\begin{bmatrix}
           \ev^{q}(\x,t) \\ \ev^p(\x,t)
       \end{bmatrix}=&P_1 \pd{}{\x} \begin{bmatrix}
           \ev^{q}(\x,t) \\ \ev^p(\x,t)
       \end{bmatrix}+\begin{bmatrix}
		    B_q(\x) \\ B_p (\x)
		\end{bmatrix} \uv(t), \quad \forall\x\in[\xl,\xr],\label{eq:PHS_3}\\
       A^\top \ev^q(\xr,t)=&-K\ev^p(\xr,t),\label{eq:BC_b2}\\
       \ev^p(\xl,t)=&0,\label{eq:BC_a2}
   \end{align}
with energy
\begin{align}
    \Ha(t)=\frac{1}{2}\inner{\begin{bmatrix}
        \ev^q(\x,t)\\ \ev^p(\x,t)
    \end{bmatrix}, \L^{-1}(\x) \begin{bmatrix}
        \ev^q(\x,t)\\ \ev^p(\x,t)
    \end{bmatrix}}_{L^2},
\end{align}
satisfying
\begin{align}
    \dot{\Ha}(t)=&-[\ev^p(\xr,t)]^\top K \ev^p(\xr,t)+\inner{ \begin{bmatrix}
        \ev^q(\x,t)\\ \ev^p(\x,t)
    \end{bmatrix}, \begin{bmatrix}
		    B_q(\x) \\ B_p (\x)
		\end{bmatrix} \uv(t)}_{L^2}. \label{eq:dHc}
\end{align}

\section{ Mixed-FEM numerical model.}\label{sec:MFEM}

 \begin{figure}
	\centering
	\includegraphics[width=0.65\columnwidth]{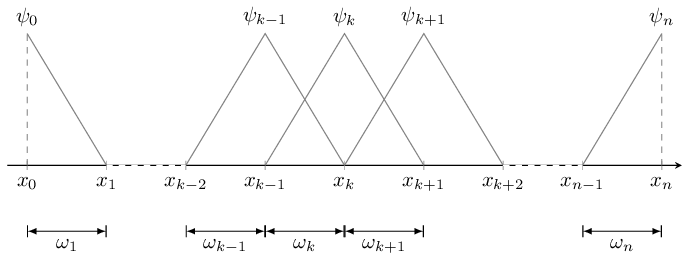}
	\caption{Mixed finite elements basis and test functions}
	\label{fig:my_label}
\end{figure} 

Consider a uniform partition of the spatial domain $[\xl,\xr]$ into $N$ elements of length $h=(\xr-\xl)/N=\ell/N$ and nodes $x_k=\xl+kh$ with $k\in\{0,\dots,N\}$.  Define the piecewise linear basis $\psi_k(\x)$ (see Figure \ref{fig:my_label})
\begin{align}
    \psi_k (x) = \begin{cases}
	\dfrac{x-x_{k-1}}{h}, & x_{k-1}  \leq x < x_{k} \\
	\dfrac{x_{k+1} -x}{h} , & x_{k}  \leq x \leq x_{k+1} \\ 
	0, & {\rm otherwise.}   
\end{cases}, \quad \forall\x\in[\xl,\xr] \text{ and } k\in\{0,\dots,N\}  ,\label{eq:basis}
\end{align}
and the piecewise constant test function $\omega_k$
\begin{align}
    \omega_k (\x) = \begin{cases}
	\frac{1}{h}, & \x_{k-1}  \leq \x < \x_{k} \\ 
	0, & {\rm otherwise}  
\end{cases},  \quad \forall\x\in[\xl,\xr] \text{ and } k\in\{1,\dots,N\}  \label{eq:test}
\end{align}
 we obtain the following properties.

\begin{lemma} \label{lem:1}
Let $\psi_k(\x)$ and $\omega_k(\x)$ be the base and test functions defined at \eqref{eq:basis} and \eqref{eq:test}, respectively, and $\nu_k(\x)=\disp\left(\sum_{l=1}^{N}a_l h \omega_l(\x)\right)\psi_k(\x)$ with $a_l\in\mathbb{R}$. The following statement holds
     \begin{itemize}
     \item[\rm (a)] For any $i,j\in \{1,\dots, N\}$,  $h\omega_i(\x)\omega_j(\x)=\omega_i(\x)\delta_{ij}$ and $h\omega_i(\x)\psi_N(\x)=\psi_N(\x)\delta_{iN}$ for all $\x\in[\xl,\xr]$, where $\delta_{ij}$ denotes the Kronecker delta.

     \item[\rm (b)] $\displaystyle \inner{ \omega_j(\x),\psi_k(\x)}$ is equal to $\dfrac{1}{2}$ if $j\in\{k,k+1\}$ and $0$ otherwise.  

    \item[\rm (c)] $\inner{\omega_j(\x),\psi'_k(\x)}$ is equal to $-\dfrac{1}{h}$ if $j=k+1$, $\dfrac{1}{h}$ if $j=k$, and $0$ otherwise. 

     \item[\rm (d)]  $\inner{\omega_j(\x), \nu_k(\x)}$ is equal to $\dfrac{a_j}{2}$ if $j\in\{k,k+1\}$ and $0$ otherwise. 
    
    \item[\rm (e)] $\inner{\omega_j(\x),\nu'_k(\x)}$ is equal to $- \dfrac{a_k}{h}$ if $j=k+1$, $ \dfrac{a_k}{h}$ if $j=k$, and $0$ otherwise.

 \end{itemize}

\end{lemma}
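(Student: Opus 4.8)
The plan is to verify each of (a)--(e) by direct computation, leaning on two structural facts: $\omega_j$ is $1/h$ times the indicator of the single element $[x_{j-1},x_j)$, so that any inner product $\inner{\omega_j,\,\cdot\,}$ collapses to $\tfrac1h$ times an integral over that one element; and $\psi_k$ is a continuous piecewise-linear hat, supported on $[x_{k-1},x_{k+1}]$, affine on each half, with $\psi_k(x_m)=\delta_{km}$ at every node.

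For (a) I would simply invoke the (a.e.\ disjoint) support structure: $\omega_i\omega_j\equiv 0$ when $i\neq j$, whereas on $[x_{i-1},x_i)$ we have $\omega_i=1/h$, so $h\omega_i^2=1/h=\omega_i$; likewise $\psi_N$ is supported on $[x_{N-1},x_N]$, which meets $\mathrm{supp}\,\omega_i$ only for $i=N$, where $h\omega_N=1$ forces $h\omega_N\psi_N=\psi_N$ (pointwise except at the single node $x_N$, which does not matter under an integral). For (b), restricting to $\mathrm{supp}\,\omega_j=[x_{j-1},x_j)$ the hat $\psi_k$ is there an affine ramp from $0$ to $1$ if $j=k$, from $1$ to $0$ if $j=k+1$, and identically $0$ otherwise; in the two nonzero cases $\int_{x_{j-1}}^{x_j}\tfrac1h\psi_k\,\d\x=\tfrac1h\cdot\tfrac h2=\tfrac12$. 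For (c) the tidiest route is the fundamental theorem of calculus on the same element: $\inner{\omega_j,\psi_k'}=\tfrac1h\int_{x_{j-1}}^{x_j}\psi_k'\,\d\x=\tfrac1h\bigl(\psi_k(x_j)-\psi_k(x_{j-1})\bigr)=\tfrac1h(\delta_{kj}-\delta_{k,j-1})$, which gives $1/h$, $-1/h$ or $0$ as claimed (the half-hats $\psi_0,\psi_N$ only remove one of the two nonzero cases).

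For (d) and (e) I would first note that on $\mathrm{supp}\,\omega_j$ only the term $l=j$ survives in $\sum_{l}a_l h\omega_l$ and it equals $a_j$, hence $\nu_k=a_j\psi_k$ there; then $\inner{\omega_j,\nu_k}=a_j\inner{\omega_j,\psi_k}$ and (d) follows at once from (b). Part (e) is the one requiring care, and I expect it to be the main obstacle: $\nu_k$ is \emph{not} continuous, because the piecewise-constant factor jumps by $a_{k+1}-a_k$ at $x_k$ while $\psi_k(x_k)=1$, so its distributional derivative is $\nu_k'=(a_{k+1}-a_k)\,\dirac{x_k}+\bigl(\sum_l a_l h\omega_l\bigr)\psi_k'$ and one must not simply differentiate the naive element-wise formula. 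Pairing with $\omega_j$ and again using that the bracket equals $a_j$ on $\mathrm{supp}\,\omega_j$ yields $\inner{\omega_j,\nu_k'}=(a_{k+1}-a_k)\,\omega_j(x_k)+a_j\inner{\omega_j,\psi_k'}$; since $\omega_j(x_k)=\tfrac1h$ exactly when $x_k$ is the \emph{left} node of the $j$-th element, i.e.\ $j=k+1$, and $\omega_j(x_k)=0$ when $j=k$ (there $x_k$ is the excluded right node), substituting (c) gives $a_k/h$ for $j=k$, and $-a_{k+1}/h+(a_{k+1}-a_k)/h=-a_k/h$ for $j=k+1$, and $0$ otherwise, as stated. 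The point to watch throughout is the half-open normalization of $\omega_j$ at the nodes: it is precisely what routes the Dirac contribution onto $j=k+1$ rather than $j=k$, and it is also what makes the boundary-index cases consistent once $a_l$ is read as $0$ for $l\notin\{1,\dots,N\}$, since $\sum_l a_l h\omega_l$ vanishes outside $[\xl,\xr]$.
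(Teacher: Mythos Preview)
Your proposal is correct and follows essentially the same approach as the paper: parts (a)--(d) rely on the disjoint-support structure and direct integration exactly as the paper does, and for (e) both you and the paper compute the distributional derivative of $\nu_k$ via the product rule, picking up the Dirac mass at $x_k$ from the jump in the piecewise-constant factor and then pairing with $\omega_j$ using the half-open convention to route that contribution onto $j=k+1$. The only cosmetic difference is that the paper first writes three Dirac deltas (at $x_{k-1},x_k,x_{k+1}$) coming from $[h\omega_k]'$ and $[h\omega_{k+1}]'$ and then kills two of them using $\psi_k(x_{k\pm1})=0$, whereas you go directly to the single surviving delta at $x_k$; the computations are otherwise identical.
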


\begin{proof}
    (a) is obtained directly from the fact that $\omega_j(\x)\omega_k(\x)=0$ for any $j\neq k$ and $\omega_j(\x)\psi_N(\x)=0$ for every $j\neq N$. (b) and (c) are obtained by solving the integrals. (d) is a direct consequence of (a) and (b). Finally, (e) is obtained as follows: Let $\dirac{\x_k}$ be the Dirac delta function centered at $\x_k$, satisfying $f(\x)\dirac{\x_k}=f(\x_k)\dirac{\x_k}$ and $\displaystyle\int_{\xl}^{\xr}\dirac{\x_k} \d\x=1$, for any $\x_k\in[\xl,\xr]$, then, $[h\omega_k(\x)]'=\dirac{\x_{k-1}}-\dirac{\x_k}$. Moreover, since $\omega_l(\x)\psi_k(\x)=0$ for every $l\notin \{k,k+1\}$, we get $\nu_k(\x)=(a_kh\omega_k(\x)+a_{k+1}h\omega_{k+1}(\x))\psi_{k}(\x)$. As a consequence,
    \begin{align*}
        \nu'_k(\x)=&[a_kh\omega_k(\x)+a_{k+1}h\omega_{k+1}(\x)]'\psi_k(\x)+[a_kh\omega_k(\x)+a_{k+1}h\omega_{k+1}(\x)]\psi'_k(\x),\\
        =&[a_k\dirac{\x_{k-1}}+(a_{k+1}-a_k)\dirac{\x_k} -a_{k+1}\dirac{\x_{k+1}}]\psi_k(\x)+[a_kh\omega_k(\x)+a_{k+1}h\omega_{k+1}(\x)]\psi_k'(\x).
    \end{align*}
     
     Since $\psi_k(\x_{k-1})=\psi_{k}(\x_{k+1})=0$ and $\psi_k(\x_k)=1$, using  (a) and the Dirac delta function properties, 
    \begin{align*}
        \inner{\omega_j(\x),\nu'_k(\x)}=& \inner{ \omega_j(\x), [a_k\dirac{\x_{k-1}}+(a_{k+1}-a_k)\dirac{\x_k} -a_{k+1}\dirac{\x_{k+1}}]\psi_k(\x)}\\
        &+\inner{ [a_kh\omega_j(\x)\omega_k(\x)+a_{k+1}h\omega_j(\x)\omega_{k+1}(\x)],\psi_k'(\x) }\\
        =&(a_{k+1}-a_k) \omega_j(\x_k) + \inner{ [a_k\omega_j(\x)\delta_{jk}+a_{k+1}\omega_j(\x) \delta_{j(k+1)}],\psi_k'(\x)},
    \end{align*}

    From \eqref{eq:test}  we have that $\omega_j(\x_k)$ is equal to $\dfrac{1}{h}$, when $j= k+1$, and $0$ otherwise. Then, (c) lead us to
    \begin{align*}
        \inner{\omega_j(\x),\nu'_k(\x)}=&0, \forall j\notin \{k,k+1\}\\
        \inner{\omega_k(\x),\nu'_k(\x)}=&a_k\inner{\omega_k(\x),\psi_k'(\x) }=\dfrac{a_k}{h}\\
        \inner{\omega_{k+1}(\x),\nu'_k(\x)}=&\dfrac{a_{k+1}-a_k}{h} + a_{k+1}\inner{\omega_{k+1}(\x),\psi_k'(\x)} =-\dfrac{a_k}{h},
    \end{align*}
    completing the proof.
\end{proof}

Using the test functions $\psi_k(\x)$ and bases $\omega_k(\x)$, we define the following approximations of the co-energy variables and physical parameters.
\begin{definition}\label{def:approx}
    Consider $\xx\in\{p,q\}$ and $i\in\{1,\dots,n\}$. Let $\bpi_{i}^{\xx}$(\x) be the inverse of physical parameter $\bp_i^{\xx}(\x)$.
   The approximations of any co-energy variable $e_i^{\xx}$ and physical parameter inverse $\bpi_i^{\xx}$ are defined as
    \begin{align*}
     e_i^{\xx}(\x,t)\approx e^{\xx N}_i(\x,t)=&
        \sum_{k=0}^{N} e^{\xx}_{i,k}(t)\psi_k(\x) \quad \text{and} \quad
    \bpi_i^{\xx}(\x)\approx\bpi_{i}^{\xx N}(\x)=
        \sum_{k=1}^{N} h \bpi^{\xx}_{i,k}\omega_k(\x),
\end{align*}
respectively, where $e_{i,k}^{\xx}(t)=e_i^{\xx}(\x_k,t)$ and $\bpi^{\xx}_{i,k}=\bpi_i^{\xx}(\x_k)=\dfrac{1}{\bp_{i}^{\xx}(\x_k)}$ denote the co-energy variable and physical parameter inverse at node $\x_k$.    
\end{definition}

To impose the boundary conditions, consider the diagonal dissipation matrix $K=diag\left(\kappa_1,\dots,\kappa_n\right)$,
and the $ij$-th element of matrices $A$ and  $A^{-1}$ by $[A]_{ij}=\ap_{ij}$ and $[A^{-1}]_{ij}=\api_{ij}$.
  respectively. Then, from  the $i$-th row of \eqref{eq:BC_b2}, we obtain $\disp e^q_{i,N}(t)= -\sum_{l=1}^n\api_{li}\kappa_l e^p_{l,N}(t)$ or  $\disp \sum_{l=1}^n \ap_{li} e^q_{l,N}(t)=-\kappa_i e^p_{i,N}(t)$ and from \eqref{eq:BC_a2}, $e_{i,0}^p(t)=0$ for every $i\in\{1,\dots,N\}$. Consequently, approximations $e^{pN}_i(\x,t)$ and $e^{qN}_i(\x,t)$ are expressed as
\begin{align}
    e^{pN}_i(\x,t)=&\sum_{k=1}^{N}e^p_{i,k}(t)\psi_k(\x),\quad \text{and} \quad
    e^{qN}_i(\x,t)=
        \sum_{k=0}^{N-1}e^q_{i,k}(t)\psi_k(\x) -\sum_{l=1}^n\api_{li}\kappa_l e^p_{l,N}(t). \label{eq:BCeq3}
\end{align}
Similarly,
\begin{align}              
    \sum_{j=1}^{n}\ap_{ji}e^{qN}_j(\x,t)
        =&\sum_{j=1}^{n}\ap_{ji}\left(\sum_{k=0}^{N-1}e^q_{j,k}(t)\psi_k(\x)\right) -\kappa_i e^p_{i,N}(t) \psi_N(\x),\label{eq:BCeq1}
\end{align}
 or equivalently, 
\begin{align}              
    \sum_{j=1}^{n}\ap_{ji}\pd{e^{qN}_j(\x,t)}{\x}
        =&\sum_{j=1}^{n}\ap_{ji}\left(\sum_{k=0}^{N-1}e^q_{j,k}(t)\psi'_k(\x)\right) -\kappa_i e^p_{i,N}(t) \psi'_N(\x).\label{eq:BCeq2}
\end{align}

The following matrices and vectors will be used in   the numerical approximation.

\begin{definition}\label{def:hat_QDAMB}
   
    Let $I_N$ be the identity matrix of size $N \times N$. $L$,  $M$ and $D$  be $N\times N$ real matrices and $\tr$ be a vector of size $N$ defined as
    \begin{align}
       L=&\begin{bmatrix}
            0 &  & \\
            1 & \ddots&\\
              & \ddots & \ddots \\
             & &  1  & 0
        \end{bmatrix},\quad M=\dfrac{1}{2} (I_{N}+L^\top) ,\quad
        D=I_{N}-L, \quad \text{and} \quad \tr=\begin{bmatrix}
            0\\\vdots\\0\\1
        \end{bmatrix},\label{eq:MDtr}
    \end{align}
    respectively. For any $\xx\in\{p,q\}$ and $i\in\{1,\dots, n\}$,  matrix $\L_{\xx i}\in\R^{N\times N}$, the corresponding inverse $\L^{-1}_{\xx i}$, and $B_{\xx i}\in\R^{N\times l}$ are defined as
    \begin{align}
        \L_{\xx i}=diag\left(\bp^\xx_{i,1},\dots, \bp^\xx_{i,N} \right),  \quad  \L^{-1}_{\xx i}=diag\left(\bpi^\xx_{i,1},\dots, \bpi^\xx_{i,N} \right), \quad \text{and} \quad [B_{\xx i}]_{j,k}=\inner{\omega_j, [B_\xx]_{i,k}}. \label{eq:Qxi}
    \end{align} 
    Then, we build the following global matrices: 
\begin{align}
\begin{split}
    \Lh_\xx=&diag(\L_{\xx 1},\dots, \L_{\xx n}), \,
    \Ah=A\otimes I_{N}, \,
    \Dh=I_n \otimes D,\,
    \Mh= I_n\otimes M,   \\
    \Bh_2=&K\otimes \tr\tr^\top, \,
    \Bh_1=\dfrac{1}{2}\Ah^{-\top} \Bh_2, \, \Bh_{\xx}^\T=[B_{\xx 1}^\T,\ldots,B_{\xx n}^\T].
\end{split}
\end{align}
where $A\otimes B$ denotes the Kronecker product between matrices  $A\in\mathbb{R}^{n\times m}$ and $B\in\mathbb{R}^{l\times k}$, defined as
\begin{align*}
    A\otimes B=&\begin{bmatrix}
        a_{11}B & a_{12}B & \cdots & a_{1m}B\\
        a_{21}B & a_{22} B & &\vdots\\
         \vdots&&\ddots&\vdots\\
        a_{n1}B &\cdots&\cdots& a_{nm}B 
    \end{bmatrix}.
\end{align*}
\end{definition}

\begin{remark}
    Using the Kronecker product properties, see \cite[Proposition 9.1.6]{Bernstein2018}, we obtain the following alternative expressions for $\Bh_1$ and $\Bh_2$:
    \begin{align}
        \Bh_1=& \dfrac{1}{2}(I_n\otimes \tr)A^{-\top} K (I_n\otimes \tr^\top), \text{ and }
        \Bh_2= (I_n\otimes \tr)K(I_n\otimes \tr^\top).
    \end{align}
\end{remark}

Consider the vectors $\ev_i^{p}(t)=[e^{p}_{i,1}(t),\dots,e^{p}_{i,N}(t)]^\top$ and $\ev_i^{q}(t)=[e^{q}_{i,0}(t),\ldots,e^{q}_{i,N-1}(t)]^\top$ for any $i\in\{1,\ldots,n\}$. Using the properties of Lemma \ref{lem:1} and equations \eqref{eq:BCeq3}--\eqref{eq:BCeq2}, the inner product
\begin{align*}
    \inner{\omega_j(\x),\bpi_i^{qN}(\x)\pd{e^{qN}_i(\x,t)}{t}}_{L^2}=&  \inner{\omega_j(\x),\sum_{l=1}^{n}\ap_{il}\pd{e^{pN}_l(\x,t)}{\x}}_{L^2}+ \inner{\omega_j(\x),[B_q(\x)]_{i,-}\uv(t)}_{L^2}, 
\end{align*}
for every $j\in\{1,\dots, N\}$, where $[B_q]_{i,-}$ denotes the $i$-th row of $B_q$, lead us to the following approximation of the dynamics for $e_i^q$, 
 \begin{align}
      \L_{qi}^{-1} \left(M\dot{\ev}^q_i(t)-\disp\sum_{l=1}^{n} \dfrac{\api_{li}\kappa_l}{2} \tr\tr^\top\dot{\ev}^p_l(t)\right)=& \sum_{l=1}^n \ap_{il}\dfrac{1}{h}D\ev^p_i(t) + B_{qi} \uv(t).\label{eq:aux0}
 \end{align}
 
Similarly, to approximate the dynamics of $e^{p}_i$ we consider the inner product
\begin{align*}
    \inner{\omega_j(\x),\bpi^{pN}(\x)\pd{e^{pN}_i(\x,t)}{t}} =& 
        \inner{\omega_j(\x),\sum_{l=1}^{n}\ap_{li}\pd{e^{qN}_l(\x,t)}{\x}}+ \inner{\omega_j(\x),[B_p(\x)]_{i,-}\uv(t)}_{L^2}, 
\end{align*}
for every $j\in\{1,\dots, N\}$, leading us to
\begin{align}
    \L^{-1}_{pi} M^\top\dot{\ev}^p_i(t)=&-\sum_{l=1}^{n}\frac{\ap_{li}}{h}D^\top \ev^{p}_l(t) -\dfrac{\kappa_i}{h}\tr\tr^\top \ev^p_i(t) +B_{pi} \uv(t).  \label{eq:aux1}
\end{align}

Repeating this procedure for every co-energy variable, we obtain the following numerical model for system \eqref{eq:PHS_3}--\eqref{eq:BC_a2}.
\begin{align}
    \begin{bmatrix}
        \Lh_{q}^{-1} & 0\\ 0& \Lh_{p}^{-1}
    \end{bmatrix}\begin{bmatrix}
        \Mh & -\Bh_1\\
        0 & \Mh^\top
    \end{bmatrix}\begin{bmatrix}
        \dot{\ev}^q(t)\\\dot{\ev}^p(t)
    \end{bmatrix} =\dfrac{1}{h}\begin{bmatrix}
        0 & \Ah \Dh\\
        -\Ah^\top\Dh^\top & -\Bh_2
    \end{bmatrix}\begin{bmatrix}
        {\ev}^q(t)\\\ev^p(t)
    \end{bmatrix}+\begin{bmatrix}
        \bar{B}_q\\\bar{B}_p
    \end{bmatrix}\uv(t).\label{eq:PHSd}
\end{align}
where the state vector $\ev^q(t)\in\mathbb{R}^{nN}$ and $\ev^p(t)\in\mathbb{R}^{nN}$ are
$$\ev^q(t)=\begin{bmatrix}
    \ev^q_1(t)\\\vdots\\\ev^q_n(t)
\end{bmatrix}\quad \text{and}\quad \ev^p(t)=\begin{bmatrix}
    \ev^p_1(t)\\\vdots\\\ev^p_n(t)
\end{bmatrix},$$
respectively.

First, consider the following properties to verify whether discretization conserves the port-Hamiltonian structure. 

\begin{lemma}\label{lem:MatrixProp}
Let the matrices $\Ah$, $\Bh_1$,$\Bh_2$, $\Mh$ and $\Dh$ defined above. Then, the following identities hold. 

\begin{center}
\begin{tabular}{l l l l}
    {\rm(B1)} $\Mh^\top \Dh = \Dh\Mh^\top$, & {\rm (B2)} $\Mh^{-1}\Dh^\top=\Dh^\top \Mh^{-1}$, & {\rm (B3)} $\dfrac{1}{2}\Dh^\top +\Mh=I_{nN}$, & {\rm (B4)} $\Ah^{-\top}\Mh^{-1}=\Mh^{-1}\Ah^{-\top}$,\\
    {\rm (B5)} $\Ah\Dh=\Dh\Ah$, & {\rm (B6)} $\Ah^{\top}\Mh^{-1}=\Mh^{-1}\Ah^{\top}$, & \multicolumn{2}{l}{  {\rm (B7)} $\Ah^\top \Dh^\top \Mh^{-1}=\Mh^{-1} \Dh^\top \Ah^\top$. }
\end{tabular}    
\end{center}

\end{lemma}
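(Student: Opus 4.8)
The plan is to verify each of the seven identities (B1)--(B7) by reducing them to simple statements about the small $N\times N$ building blocks $M=\tfrac12(I_N+L^\top)$, $D=I_N-L$, together with the structural fact that the ``global'' matrices are Kronecker products. The key observation is that $\Mh=I_n\otimes M$, $\Dh=I_n\otimes D$, and $\Ah=A\otimes I_N$, so that by the mixed-product rule $(X\otimes Y)(Z\otimes W)=(XZ)\otimes(YW)$, any product of an $\Ah$-factor with an $\Mh$- or $\Dh$-factor commutes \emph{automatically in the Kronecker sense}: e.g. $\Ah\Dh=(A\otimes I_N)(I_n\otimes D)=A\otimes D=(I_n\otimes D)(A\otimes I_N)=\Dh\Ah$, which is (B5). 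The same one-line argument, possibly combined with $(X\otimes Y)^\top=X^\top\otimes Y^\top$ and $(X\otimes Y)^{-1}=X^{-1}\otimes Y^{-1}$, disposes of (B4), (B6), (B7): each asserts that a factor built from $A$ (acting on the first tensor slot) commutes with a factor built from $M$ or $D$ (acting on the second slot), which is immediate since operators on different tensor factors always commute.

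For (B1), (B2), (B3) the Kronecker structure reduces the claim to the corresponding identity for the single blocks $M$ and $D$. First I would record that $L$ and $L^\top$ are the sub- and super-diagonal shift matrices, hence $M$ is lower bidiagonal with $\tfrac12$ on the diagonal and $\tfrac12$ on the superdiagonal of $L^\top$... more precisely $M=\tfrac12 I_N+\tfrac12 L^\top$, and $D=I_N-L$. Then (B3) is the scalar computation $\tfrac12 D^\top+M=\tfrac12(I_N-L^\top)+\tfrac12(I_N+L^\top)=I_N$, which tensors up to $\tfrac12\Dh^\top+\Mh=I_{nN}$. For (B1), I would compute $M^\top D$ and $DM^\top$ directly: $M^\top=\tfrac12(I_N+L)$, so $M^\top D=\tfrac12(I_N+L)(I_N-L)=\tfrac12(I_N-L^2)$ and $DM^\top=\tfrac12(I_N-L)(I_N+L)=\tfrac12(I_N-L^2)$; these agree because $L$ commutes with itself, giving $M^\top D=DM^\top$, hence (B1) after tensoring with $I_n$. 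For (B2), since $M^\top D=DM^\top$ and both $M$ and $D$ are invertible (being triangular with nonzero diagonal), left-multiplying by $M^{-\top}=(M^\top)^{-1}$... actually I would note $M^\top D=DM^\top \iff D^\top M = M D^\top$ (transpose both sides) $\iff M^{-1}D^\top = D^\top M^{-1}$ (multiply by $M^{-1}$ on both sides), which is (B2) at block level; tensoring with $I_n$ finishes it.

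The main obstacle is essentially bookkeeping rather than depth: one must be careful that $M$ and $D$ genuinely commute as claimed, since in general triangular Toeplitz matrices of different patterns need not commute — here it works precisely because $M$ and $D$ are both polynomials in the single shift matrix $L$ (indeed $M=\tfrac12(I+L^\top)$ involves $L^\top$, so the commutation in (B1) relies on pairing $M^\top$, a polynomial in $L$, with $D$, also a polynomial in $L$). I would make this ``polynomials in $L$'' remark explicit, as it is the true engine behind (B1)--(B2), and then note that (B4)--(B7) need no such care because they are pure Kronecker-factor-separation statements. Finally I would assemble: list the block identities, invoke the mixed-product and transpose/inverse rules for $\otimes$, and conclude that (B1)--(B7) hold as stated.
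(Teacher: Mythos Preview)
Your proposal is correct and follows essentially the same route as the paper: reduce (B1)--(B3) to identities for the $N\times N$ blocks $M,D$ and then lift via the Kronecker mixed-product rule, while (B4)--(B7) are immediate because the $\Ah$-factor acts on the first tensor slot and the $\Mh,\Dh$-factors on the second. The one minor difference is that the paper quotes the block identities $M^\top D=DM^\top$, $M^{-1}D^\top=D^\top M^{-1}$, $\tfrac12 D^\top+M=I_N$ from \cite{DelReyFernandez2023}, whereas you prove them directly via the clean observation that $M^\top$ and $D$ are both polynomials in the shift $L$; this makes your argument self-contained but is not a different strategy.
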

\begin{proof}

It was shown in \cite{DelReyFernandez2023} that matrices $M$ and $D$ satisfy the following properties 
    $${\rm(I)}\; M^\top D = DM^\top, \quad {\rm(II)}\; M^{-1}D^\top=D^\top M^{-1}, \quad \text{and} \quad {\rm(III)}\; \dfrac{1}{2}D^\top +M=I_N.$$

As a consequence, (B1) is obtained using \cite[Proposition 9.1.6]{Bernstein2018}, that is, $\Mh^\top \Dh =(I_n\otimes M^\top )(I_n\otimes D )=I_n\otimes M^\top D$, then from (I) we get $I_n\otimes M^\top D=I_n\otimes  D M^\top= (I_n\otimes D )(I_n\otimes M^\top ) =\Dh\Mh^\top$. The proof of (B2) follows the same procedure and uses (II).
    For (B3) we use \cite[Proposition 9.1.4]{Bernstein2018} obtaining $\dfrac{1}{2}\Dh+\Mh=\dfrac{1}{2}I_n \otimes D+I_n\otimes M=I_n\otimes \left(\dfrac{1}{2}D +M\right)$, then, from (III), $I_n\otimes \left(\dfrac{1}{2}D +M\right)=I_n\otimes I_N=I_{nN}$.

    Identity (B4) is obtained using \cite[Proposition 9.1.6]{Bernstein2018}  as follows,  $\Ah^{-\top}\Mh^{-1}=(A^{-\top} \otimes  I_N)(I_n \otimes M^{-1})=A^{-\top}\otimes M^{-1} = (I_n \otimes M^{-1})(A^{-\top} \otimes  I_N)=\Mh^{-1} \Ah^{-\top}$.
    Identities (B5) and (B6) follow the same procedure as (B4).
    (B7) is obtaining applying (B2), (B5), and (B6).
\end{proof}

\begin{prop}\label{prop:PHS}
    Define the matrices 
    \begin{align}
    \begin{split}
          &Q_d=  h\begin{bmatrix}
        \Mh & -\Bh_1\\
        0 & \Mh^\top
    \end{bmatrix}, \quad 
    S_d=  \begin{bmatrix}
        \Lh_q^{-1} & 0\\ 0& \Lh_p^{-1}
    \end{bmatrix}\begin{bmatrix}
        \Mh & -\Bh_1\\
        0 & \Mh^\top
    \end{bmatrix}, \quad B_d=\begin{bmatrix}
        \Bh_q\\\Bh_p
    \end{bmatrix}, \\ & R_d=\begin{bmatrix}
        0 & 0 \\ 0 & \dfrac{\Mh^{-1}\Bh_2\Mh^{-\top}}{h^2}
    \end{bmatrix},
      \quad   J_d=\dfrac{1}{h^2}\begin{bmatrix}
         0 &   \Ah \Dh \Mh^{-\top}\\
      - \Mh^{-1} \Dh^\top \Ah^\top  & 0
    \end{bmatrix}.
    \end{split}
    \label{eq:QdSd}
\end{align}
 System \eqref{eq:PHSd} can be expressed as the port-Hamiltonian formulation
\begin{align}
    S_d\begin{bmatrix}
        \dot{\ev}^q(t)\\\dot{\ev}^p(t)
    \end{bmatrix}=(J_d-R_d)Q_d \begin{bmatrix}
        \ev^q(t) \\ \ev^p(t)
    \end{bmatrix}+B_d\uv(t),\label{eq:PHSd2}
\end{align}
 with the discrete energy 
\begin{align}
    \Ha_d(t)=\dfrac{1}{2} \inner{S_d\begin{bmatrix}
        \ev^q(t)\\ \ev^p(t)
    \end{bmatrix} , Q_d \begin{bmatrix}
        \ev^q(t) \\ \ev^p(t)
    \end{bmatrix}},
\end{align}
satisfying the rate of change 
\begin{align}
    \begin{split}
        \dot{\Ha}_d=&-\begin{bmatrix}
        \ev^q(t) \\ \ev^p(t)
    \end{bmatrix}^\top Q_d^\top   \begin{bmatrix}
        0 & 0 \\ 0 & \dfrac{\Mh^{-1}\Bh_2\Mh^{-\top}}{h^2}
    \end{bmatrix}   Q_d \begin{bmatrix}
        \ev^q(t) \\ \ev^p(t)
    \end{bmatrix}+ \begin{bmatrix}
        \ev^q(t) \\ \ev^p(t)
    \end{bmatrix}^\top Q_d^\top B_d \uv(t), \\ 
    =& -\begin{bmatrix}
        \ev^q(t) \\ \ev^p(t)
    \end{bmatrix}^\top S_d^\top   \begin{bmatrix}
        0 & 0 \\ 0 & \Lh_p\Mh^{-1}\Bh_2\Mh^{-\top} \Lh_p
    \end{bmatrix}   S_d \begin{bmatrix}
        \ev^q(t) \\ \ev^p(t)
    \end{bmatrix}+ \begin{bmatrix}
        \ev^q(t) \\ \ev^p(t)
    \end{bmatrix}^\top Q_d^\top B_d \uv(t).
    \end{split}
\end{align}

\end{prop}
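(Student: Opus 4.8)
The plan is to start from the component-wise model \eqref{eq:PHSd} and rewrite it in the claimed form by identifying, up to the factor $h$, the left-hand matrix product with $S_d$ and the right-hand matrix with a factored $(J_d-R_d)Q_d$. Concretely, I would first multiply \eqref{eq:PHSd} on the right-hand side by $h^{-1}$ absorbed as $Q_d=h\,[\begin{smallmatrix}\Mh & -\Bh_1\\ 0 & \Mh^\top\end{smallmatrix}]$, so that $S_d=\mathrm{diag}(\Lh_q^{-1},\Lh_p^{-1})\,Q_d/h$. The real work is to check the factorization
\[
\frac{1}{h}\begin{bmatrix} 0 & \Ah\Dh\\ -\Ah^\top\Dh^\top & -\Bh_2\end{bmatrix}
=(J_d-R_d)\,Q_d,
\]
which, since $Q_d/h=[\begin{smallmatrix}\Mh & -\Bh_1\\ 0 & \Mh^\top\end{smallmatrix}]$, reduces to a $2\times2$ block computation. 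Using the definitions of $J_d$ and $R_d$, the product $(J_d-R_d)(Q_d/h)$ has $(1,1)$-block $h^{-2}\Ah\Dh\Mh^{-\top}\cdot 0=0$; $(1,2)$-block $h^{-2}\Ah\Dh\Mh^{-\top}\Mh^\top=h^{-1}\cdot\frac1h\Ah\Dh$; $(2,1)$-block $-h^{-2}\Mh^{-1}\Dh^\top\Ah^\top\Mh=h^{-1}\cdot(-\frac1h\Mh^{-1}\Dh^\top\Ah^\top\Mh)$; and $(2,2)$-block $-h^{-2}\Mh^{-1}\Dh^\top\Ah^\top(-\Bh_1)-h^{-2}\Mh^{-1}\Bh_2\Mh^{-\top}\Mh^\top$. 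So I need the two identities $\Mh^{-1}\Dh^\top\Ah^\top\Mh=\Ah^\top\Dh^\top$ (equivalently, from (B7), $\Mh^{-1}\Dh^\top\Ah^\top=\Ah^\top\Dh^\top\Mh^{-1}$) and that the $(2,2)$-block collapses to $-\frac1h\Bh_2$; the latter uses $\Bh_1=\frac12\Ah^{-\top}\Bh_2$ together with (B3) ($\frac12\Dh^\top+\Mh=I$) to combine $\frac12\Ah^\top\Dh^\top\Ah^{-\top}\Bh_2\Mh^{-\top}$ — wait, one must be careful with the order of $\Ah^\top$ and $\Ah^{-\top}$, so I would instead use $\Ah^\top\Ah^{-\top}=I$ only after commuting $\Dh^\top$ past $\Ah^\top$ via (B5)/(B7), yielding $\frac12\Mh^{-1}\Dh^\top\Bh_2\Mh^{-\top}+\Mh^{-1}\Bh_2\Mh^{-\top}=\Mh^{-1}(\frac12\Dh^\top+I)\Bh_2\Mh^{-\top}$... this is slightly off from (B3), so the precise bookkeeping of which $\Mh$'s can be pulled through $\Bh_2=K\otimes\tr\tr^\top$ is the delicate point. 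I expect this $(2,2)$-block identity to be the main obstacle, and I would handle it by exploiting that $\Mh^{-\top}\tr=\tr$ and $\tr^\top\Mh^{-1}=\tr^\top$ (since $M^\top\tr=\tr$, as $M=\frac12(I+L^\top)$ and $L^\top\tr=0$), so that $\Mh^{-1}\Bh_2\Mh^{-\top}=\Bh_2$ and similarly the $\Dh^\top$ terms simplify because $\Dh^\top\tr$ has a clean form.

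Once \eqref{eq:PHSd2} is established, the energy identity is a formal computation: differentiating $\Ha_d=\frac12\langle S_d \zb,Q_d\zb\rangle$ with $\zb=[\ev^q;\ev^p]$ gives $\dot\Ha_d=\frac12\langle S_d\dot\zb,Q_d\zb\rangle+\frac12\langle S_d\zb,Q_d\dot\zb\rangle$, and I would need $S_d^\top Q_d$ to be symmetric (so the energy is well-defined and the cross terms combine). This symmetry follows because $S_d^\top Q_d=h\,[\begin{smallmatrix}\Mh & 0\\ -\Bh_1^\top & \Mh\end{smallmatrix}]\mathrm{diag}(\Lh_q^{-1},\Lh_p^{-1})[\begin{smallmatrix}\Mh & -\Bh_1\\ 0 & \Mh^\top\end{smallmatrix}]$ — I would verify the off-diagonal blocks match using that $\Lh_p^{-1}$ commutes appropriately and $\Bh_1$ has the Kronecker structure $\frac12\Ah^{-\top}\Bh_2$; here the diagonal matrices $\Lh_q^{-1},\Lh_p^{-1}$ do not commute with $\Mh$ or $\Bh_1$ in general, so symmetry of $S_d^\top Q_d$ is actually \emph{not} obvious and may require the stated structure — I suspect the intended reading is that $Q_d^\top S_d$ (not $S_d^\top Q_d$) is symmetric, or that the computation is done directly. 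Substituting $S_d\dot\zb=(J_d-R_d)Q_d\zb+B_d\uv$ and using $Q_d^\top J_d Q_d$ skew-symmetric and $Q_d^\top R_d Q_d=Q_d^\top\,\mathrm{diag}(0,h^{-2}\Mh^{-1}\Bh_2\Mh^{-\top})\,Q_d\succeq0$ yields the first displayed form of $\dot\Ha_d$; the second form follows by writing $Q_d=\mathrm{diag}(\Lh_q,\Lh_p)S_d$, i.e. substituting $Q_d\zb=\mathrm{diag}(\Lh_q,\Lh_p)S_d\zb$ into the dissipation term so that the $(2,2)$ weight becomes $\Lh_p\Mh^{-1}\Bh_2\Mh^{-\top}\Lh_p$ sandwiched by $S_d\zb$.

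The steps in order: \textbf{(1)} pull the factor $h$ through \eqref{eq:PHSd} to expose $S_d$ and $Q_d/h$; \textbf{(2)} compute the four blocks of $(J_d-R_d)(Q_d/h)$ and match against $h^{-1}[\begin{smallmatrix}0&\Ah\Dh\\-\Ah^\top\Dh^\top&-\Bh_2\end{smallmatrix}]$, invoking (B1)--(B7) and the identities $\Mh^{-\top}\tr=\tr$, $\tr^\top\Mh^{-1}=\tr^\top$ for the $(2,2)$-block; \textbf{(3)} read off \eqref{eq:PHSd2}; \textbf{(4)} differentiate $\Ha_d$, substitute the dynamics, use skew-symmetry of the $J_d$ part to kill the conservative term and collect the $R_d$ term, giving the first form; \textbf{(5)} rewrite $Q_d\zb=\mathrm{diag}(\Lh_q,\Lh_p)S_d\zb$ in the dissipation term to obtain the second form. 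The main obstacle is step (2)'s $(2,2)$-block, where the interplay of $\Bh_1=\frac12\Ah^{-\top}\Bh_2$, the matrices $\Mh,\Dh$, and the rank-one-in-$N$ structure $\tr\tr^\top$ of $\Bh_2$ must conspire via (B3) and $M^\top\tr=\tr$ to produce exactly $-\frac1h\Bh_2$; a secondary subtlety is confirming the symmetry needed for the energy rate in step (4), which I would resolve by direct block multiplication rather than appealing to a general commutation.
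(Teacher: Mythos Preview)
Your approach is essentially the paper's: both verify the block identity linking $\frac{1}{h}\bigl[\begin{smallmatrix}0&\Ah\Dh\\-\Ah^\top\Dh^\top&-\Bh_2\end{smallmatrix}\bigr]$ to $(J_d-R_d)Q_d$, the only difference being that the paper post-multiplies the left-hand matrix by $(Q_d/h)^{-1}$ and simplifies to $h^2(J_d-R_d)$, whereas you multiply out $(J_d-R_d)(Q_d/h)$ and match. Both directions use exactly the same ingredients (B3), (B7).

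However, the two ``obstacles'' you flag are not real, and your bookkeeping in the $(2,2)$-block went astray. With signs tracked correctly,
\[
(-h^{-2}\Mh^{-1}\Dh^\top\Ah^\top)(-\Bh_1)+(-h^{-2}\Mh^{-1}\Bh_2\Mh^{-\top})\Mh^\top
= h^{-2}\Mh^{-1}\Bigl(\tfrac12\Dh^\top - I\Bigr)\Bh_2
= -h^{-2}\Bh_2,
\]
using only $\Ah^\top\Ah^{-\top}=I$ and (B3) in the form $\tfrac12\Dh^\top-I=-\Mh$; the $\Mh^{-\top}\Mh^\top$ cancels, so no $\Mh^{-\top}$ survives and no $\tr$-identities are needed. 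Your concern about the symmetry of $S_d^\top Q_d$ is also unnecessary: since $S_d=h^{-1}\mathrm{diag}(\Lh_q^{-1},\Lh_p^{-1})\,Q_d$, one has $S_d^\top Q_d=h^{-1}Q_d^\top\,\mathrm{diag}(\Lh_q^{-1},\Lh_p^{-1})\,Q_d$, which is manifestly symmetric (and positive definite), so the energy-rate computation in your step (4) goes through directly. The paper in fact omits the energy-rate calculation entirely, treating it as immediate from the skew-symmetry of $J_d$ once \eqref{eq:PHSd2} is established.
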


\begin{proof}
 Using $S_d$, $B_d$ and $Q_d$ defined in \eqref{eq:QdSd},   from \eqref{eq:PHSd} we have
    \begin{align*}
    S_d\begin{bmatrix}
        \dot{\ev}^q(t) \\ \dot{\ev}^p(t)
    \end{bmatrix} 
    =& \frac{1}{h^2} \begin{bmatrix}
        0 & \Ah\Dh\\
        -\Ah^\top \Dh^\top& -\Bh_2
    \end{bmatrix}\begin{bmatrix}
        \Mh & -\Bh_1\\
        0 & \Mh^\top
    \end{bmatrix}^{-1} Q_d\begin{bmatrix}
        {\ev}^q(t) \\ \ev^p(t)
    \end{bmatrix} +  B_d \uv(t).
    \end{align*}

 Using properties (B7), (B6) and (B3) we obtain
 \begin{align*}
    \begin{bmatrix}
        0 & \Ah\Dh\\
        -\Ah^\top \Dh^\top& -\Bh_2
    \end{bmatrix}\begin{bmatrix}
        \Mh & -\Bh_1\\
        0 & \Mh^\top
    \end{bmatrix}^{-1}=&\begin{bmatrix}
      0 &   \Ah \Dh \Mh^{-\top}\\
      -\Ah^\top \Dh^\top \Mh^{-1}  & -\Ah^\top \Dh^\top \Mh^{-1}\Bh_1\Mh^{-\top} -\Bh_2\Mh^{-\top}
    \end{bmatrix}\\
    =& \begin{bmatrix}
      0 &   \Ah \Dh \Mh^{-\top}\\
      - \Mh^{-1} \Dh^\top \Ah^\top  & - \dfrac{1}{2}\Dh^\top \Mh^{-1}\Bh_2\Mh^{-\top} -\Bh_2\Mh^{-\top}
    \end{bmatrix}\\
    =& \begin{bmatrix}
      0 &   \Ah \Dh \Mh^{-\top}\\
      -  \Mh^{-1} \Dh^\top \Ah^\top  & - \left(\dfrac{1}{2}\Dh^\top+\Mh\right) \Mh^{-1}\Bh_2\Mh^{-\top} 
    \end{bmatrix}\\
    = & \begin{bmatrix}
      0 &   \Ah \Dh \Mh^{-\top}\\
      -  \Mh^{-1} \Dh^\top \Ah^\top  & - \Mh^{-1}\Bh_2\Mh^{-\top}  
    \end{bmatrix}.
\end{align*}

Then, 
\begin{align*}
    S_d\begin{bmatrix}
        \dot{\ev}^q(t) \\ \dot{\ev}^p(t)
    \end{bmatrix} 
    =& \left( \dfrac{1}{h^2}\begin{bmatrix}
         0 &   \Ah \Dh \Mh^{-\top}\\
      - \Mh^{-1} \Dh^\top \Ah^\top  & 0
    \end{bmatrix}- \begin{bmatrix}
        0 & 0 \\ 0 & \dfrac{\Mh^{-1}\Bh_2\Mh^{-\top}}{h^2}
    \end{bmatrix}\right)Q_d\begin{bmatrix}
        {\ev}^q(t) \\ \ev^p(t)
    \end{bmatrix} + B_d \uv(t),
\end{align*}
completing the proof.    
\end{proof}

\section{Uniform  exponential stability}\label{sec:Multiplier}

In this section, we will use the  multiplier method to obtain a bound on  the exponential decay rate of the approximated system obtained  in Section \ref{sec:MFEM}, assuming $\uv(t)=0$.
Let $C$ be the $N\times N$ matrix defined as
\begin{align}
    C=&\dfrac{1}{2}\begin{bmatrix}
        0 & 1 \\
         -2 &0 & 2\\
          & -3 &0 & 3 \\
            &&\ddots & \ddots & \ddots        \\
           &&& 1-N &0 & N-1\\
           &&& &-2N & 2N
    \end{bmatrix},\label{eq:C}
\end{align}
\begin{align}
\Ch & =I_n\otimes C \nonumber \\
     \Wh &=-h\Dh^{-\top}\Ch^\top\Mh\label{eq:Wh}
 \end{align}
satisfying $\Ah^{-\top}\Wh=\Wh\Ah^{-\top}$.

Define
\begin{align}
    V_{\epsilon}(t)=-\dfrac{h}{2}\inner{S_d\begin{bmatrix}
        \ev^q\\ \ev^p
    \end{bmatrix},\begin{bmatrix}
        0 & \Ah\\\Ah^\top & 0
    \end{bmatrix}^{-1}\begin{bmatrix}
        \Wh^\top & 0 \\ 0 & \Wh
    \end{bmatrix} S_d\begin{bmatrix}
        \ev^q\\ \ev^p
    \end{bmatrix}}, \label{eq:Ve}
    \end{align}
and consider the Lyapunov function candidate
\begin{align}
    V(t)=&\Ha_d(t)+\epsilon V_\epsilon(t).\label{eq:V}
\end{align}
  The Lyapunov function in \eqref{eq:V}--\eqref{eq:Ve} is a discretized version of the functional in  \cite{Mora2023} for boundary-damped port-Hamiltonian systems \eqref{eq:PHS_2}--\eqref{eq:BC_a}, with matrix $\Wh$ playing the role of the multiplier function $m(\x)$.
The classical multiplier approach  \cite{Tucsnak2009} will be used. It will be shown that  $V$ is bounded from above and below by $H_d$ and decays exponentially. This is then used to show the numerical approximation  \eqref{eq:PHSd2} is exponentially stable. More precisely, 
it will be shown  that $V_\epsilon$ satisfies the following two conditions: There are $\epsilon_0>0$, $\delta^d$ and $\epsilon_1$, independent of the mesh size, such tat
\begin{itemize}
    \item [(C1)] 
    \begin{align}
        |V_\epsilon (t) |\leq& \dfrac{1}{\epsilon_0} \Ha_d (t),\label{eq:VeCond1}
    \end{align}
    \item[(C2)]  
    \begin{align}
        \dot{V}_{\epsilon} (t) \leq& -\delta^d \Ha_d (t)  -\dfrac{1}{\epsilon_1}\dot{\Ha}_d (t).\label{eq:VeCond2}
    \end{align}
\end{itemize}

The following two lemmas are needed.
\begin{lemma}\label{lem:4}
    Let $\boldsymbol{g}$ be a vector defined as $\boldsymbol{g}=[\boldsymbol{g}_1, \dots, \boldsymbol{g}_n]^\top \in \mathbb{R}^{nN}$ with $\boldsymbol{g}_j\in\mathbb{R}^{N}$ for any $j\in\{1,\dots, n\}$, and $P_1=P_1^{\top}\in\mathbb{R}^{2n \times 2n}$ as defined in Section \ref{sec:PHS}. Matrices $\Ah$ and $\Wh$ satisfy the following identities:
    \begin{align}
        \max eig\left(\begin{bmatrix}
            0 & \Ah\\ \Ah^\top & 0
        \end{bmatrix}^{-2}\right)=&\max eig(P_1^{-2})
\text{ and }
         \|\Wh\boldsymbol{g}\|^2=\|\Wh^\top\boldsymbol{g}\|^2\leq \ell^2 \|\boldsymbol{g}\|^2,
    \end{align}
    where $\ell=\xr-\xl$ is the length of the spatial domain.
\end{lemma}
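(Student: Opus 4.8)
The plan is to treat the two identities separately, exploiting the Kronecker structure throughout. For the first identity, I would begin by observing that $\begin{bmatrix} 0 & \Ah \\ \Ah^\top & 0 \end{bmatrix} = \begin{bmatrix} 0 & A \\ A^\top & 0\end{bmatrix}\otimes I_N = P_1 (I_{2n}\otimes I_N)$ — more precisely, writing $\Ah = A\otimes I_N$ and using the block structure, this matrix is permutation-similar to $P_1 \otimes I_N$. Since the nonzero-block pattern of $\begin{bmatrix} 0 & \Ah\\ \Ah^\top & 0\end{bmatrix}$ is exactly that of $P_1$ with each scalar entry $[P_1]_{ij}$ replaced by $[P_1]_{ij} I_N$, a simple reindexing of coordinates (grouping the $N$ copies) shows this matrix is orthogonally similar to $P_1\otimes I_N$. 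Then its square is similar to $P_1^2\otimes I_N = P_1^2 \otimes I_N$, whose inverse is $P_1^{-2}\otimes I_N$, and the spectrum of a Kronecker product is the product of spectra, so $\max\mathrm{eig}\!\left(\begin{bmatrix} 0 & \Ah\\ \Ah^\top & 0\end{bmatrix}^{-2}\right) = \max\mathrm{eig}(P_1^{-2})\cdot\max\mathrm{eig}(I_N) = \max\mathrm{eig}(P_1^{-2})$. (One must check $P_1$ is invertible, which follows since $A$ is invertible.)

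For the second identity, recall $\Wh = -h\,\Dh^{-\top}\Ch^\top\Mh$ with $\Ch = I_n\otimes C$, $\Dh = I_n\otimes D$, $\Mh = I_n\otimes M$, so $\Wh = I_n\otimes(-h\,D^{-\top}C^\top M) =: I_n\otimes W$ where $W = -h\,D^{-\top}C^\top M$. Hence $\|\Wh\boldsymbol g\|^2 = \sum_{j=1}^n \|W\boldsymbol g_j\|^2$ and $\|\Wh^\top\boldsymbol g\|^2 = \sum_{j=1}^n\|W^\top\boldsymbol g_j\|^2$, so it suffices to prove $\|W\boldsymbol v\|^2 = \|W^\top\boldsymbol v\|^2 \le \ell^2\|\boldsymbol v\|^2$ for every $\boldsymbol v\in\mathbb R^N$, i.e. that $W$ and $W^\top$ have the same operator norm (automatic, since $\|W\|=\|W^\top\|$ always) and that $\|W\|\le\ell = Nh$. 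The crux is the norm bound. I would compute $W$ explicitly: $D = I_N - L$ is lower-bidiagonal, so $D^{-\top} = (I-L^\top)^{-1} = \sum_{k\ge 0}(L^\top)^k$ is upper-triangular with all entries equal to $1$ on and above the diagonal; multiplying out $D^{-\top}C^\top M$ with the explicit tridiagonal-type $C$ from \eqref{eq:C} and $M = \tfrac12(I_N+L^\top)$, I expect $W$ to collapse to something extremely simple — plausibly a rank-structured matrix whose entries are small integer multiples of $h$, so that a direct estimate (e.g. via $\|W\|^2 \le \|W\|_1\|W\|_\infty$ or an explicit Frobenius bound) yields $\|W\|\le Nh = \ell$.

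The main obstacle is this last explicit computation of $W = -h\,D^{-\top}C^\top M$ and the subsequent sharp norm estimate: the matrix $C$ has rows scaled by the growing integers $1,2,\dots,N$ (and a final row with entries $\mp 2N, 2N$), so a naive bound on $\|C\|$ alone would be of order $N^2 h$, far too large. The point must be that left-multiplication by $D^{-\top}$ (summation) telescopes these growing coefficients against the differences built into $C$, producing cancellation that brings the norm down to $O(Nh)=O(\ell)$. I would verify this by identifying $-h\,D^{-\top}C^\top$ first — I expect it to be a clean matrix such as one with $(i,j)$ entry proportional to $h\min(i,j)$ or $h\,x_{\min(i,j)}$-like, reflecting that $W$ is the discrete analogue of the multiplier $m(\x)=\x-\xl$ acting by $v\mapsto m\cdot v$ composed with an averaging — and then the bound $\|W\|\le\ell$ is exactly the discrete counterpart of $\|m\|_{L^\infty}=\ell$. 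Once $W$ is in hand, the Kronecker reduction above finishes both claims.
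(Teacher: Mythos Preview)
Your plan matches the paper's proof closely: both parts exploit the Kronecker structure exactly as you describe, and for the second identity the paper also reduces to the single block $W=-hD^{-\top}C^\top M$ and then computes $[W\boldsymbol f]_j$ explicitly, bounding $\|W\boldsymbol f\|^2$ by repeated use of $(a+b)^2\le 2a^2+2b^2$ to arrive at $\|W\boldsymbol f\|^2\le \ell^2\|\boldsymbol f\|^2$.

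One correction to your expectations: the telescoping you anticipate in $D^{-\top}C^\top$ does occur, but it yields a \emph{bidiagonal} matrix (row $j$ of $CD^{-1}$ has $j/2$ at columns $j,j+1$ for $j<N$, and $N$ at column $N$ for $j=N$), so after multiplying by $M$ the matrix $W$ is \emph{symmetric tridiagonal} with entries $W_{j,j\pm1}=-\tfrac{h}{4}\min(j,j\pm1)$ and diagonal $-\tfrac{h}{4}(2j-1)$ (with the last diagonal entry $-\tfrac{h}{4}(3N-1)$), not a $\min(i,j)$-type rank-structured matrix. Once you have this, your proposed bound $\|W\|_2^2\le\|W\|_1\|W\|_\infty$ works and in fact gives $\|W\|\le Nh-\tfrac{h}{2}<\ell$, slightly sharper than the paper's direct estimate. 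The symmetry of $W$ also explains the pointwise equality $\|W\boldsymbol v\|=\|W^\top\boldsymbol v\|$ in the lemma statement, which your remark ``$\|W\|=\|W^\top\|$ always'' covers only at the level of operator norms.
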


\begin{proof}
See the appendix section.
\end{proof}

\begin{lemma}\label{lem:inner}
    Let $\bp(\x)$ be a strictly positive continuous function on interval $[\xl,\xr]$, and $\bp_{k}=\bp(\x_k)$ be it value at node $\x_k$ and $\bpi_k$ the corresponding inverse. $\widetilde{\L}$ and $O$ be $N \times N$ matrices defined as $\widetilde{\L}=diag(\bp_1,\dots,\bp_N)$ and 
    \begin{align}
        {O}=& \dfrac{1}{2}\begin{bmatrix}
        0 & \bp_{2}-\bp_{1} & \\
        \bp_{2}-\bp_{1}& 0 & 2\left(\bp_{3}- \bp_{2}\right) \\
         &2\left(\bp_{3}- \bp_{2}\right) & 0 & \ddots 
         \\
          & &\ddots & \ddots & \ddots \\
          & & & \ddots & \ddots & (N-1)\left(\bp_{N}-\bp_{N-1}\right)\\
                 & & & & (N-1)\left(\bp_{N}-\bp_{N-1}\right) & 0 
    \end{bmatrix}.\label{eq:Opi}
    \end{align}    

    Given $M$, $D$ and $\tr$ as defined at \eqref{eq:MDtr} and $C$ at \eqref{eq:C}, the following statements hold for any $\boldsymbol{f}\in \mathbb{R}^{N}$:
    \begin{align}
        h\inner{ M \widetilde{\L}^{-1}  M^\top \boldsymbol{f},C \boldsymbol{f}}\leq&    -\dfrac{h}{2}\boldsymbol{f}^\top M \widetilde{\L}^{-1} \left(\widetilde{\L}-O\right) \widetilde{\L}^{-1} M^\top \boldsymbol{f} +\dfrac{\ell \bpi_{N}}{2 } \boldsymbol{f}^\top \tr \tr^\top \boldsymbol{f} \label{eq:iden1}\\
        -\dfrac{1}{h}\inner{D^\top \widetilde{\L} D \boldsymbol{f},C \boldsymbol{f}}=& -\dfrac{1}{2h}\boldsymbol{f}^\top D^\top \left( \widetilde{\L}-O\right) D\boldsymbol{f} -\dfrac{N\bp_{N}}{2h}\boldsymbol{f}^\top D^\top \tr\tr^\top D \boldsymbol{f} \label{eq:iden2}.
    \end{align}
\end{lemma}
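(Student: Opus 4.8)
The plan is to prove the two identities \eqref{eq:iden1} and \eqref{eq:iden2} by direct component-wise computation, exploiting the near-tridiagonal structure of the matrices $C$, $O$, $M$, $D$. Each identity is essentially a discrete ``integration-by-parts'' formula — the matrix $C$ plays the role of the multiplier operator $m(\x)\pd{}{\x} + \tfrac12 m'(\x)$ (note the entries $\tfrac{k}{2}$ on the super/sub-diagonals of $C$ grow linearly with $k$, matching $m(\x_k)=x_k-\xl = kh$ up to the factor $h$, and $\tr\tr^\top$ captures the boundary term at $\x_r$). So the strategy mirrors the continuous multiplier computation of \cite{Mora2023}, carried out at the discrete level.

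For \eqref{eq:iden2}: First I would set $\boldsymbol{v} = D\boldsymbol{f}$, so that $D^\top\widetilde{\L}D\boldsymbol{f} = D^\top\widetilde{\L}\boldsymbol{v}$, and then the left side becomes $-\tfrac1h\inner{D^\top\widetilde{\L}\boldsymbol{v}, C\boldsymbol{f}}$. The key step is to move $D^\top$ across: $\inner{D^\top\widetilde{\L}\boldsymbol{v},C\boldsymbol{f}} = \inner{\widetilde{\L}\boldsymbol{v}, DC\boldsymbol{f}}$. One then needs the algebraic relation linking $DC$ back to $D$ and the ``defect'' matrix $O$; concretely I expect an identity of the shape $2\,\widetilde{\L}\,DC + (DC)^\top\widetilde{\L}^\top$-type symmetrization to reproduce $D^\top(\widetilde{\L}-O)D$ plus the rank-one boundary piece $N\bp_N\,D^\top\tr\tr^\top D$. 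Since $C$ is skew-like except for its last row (the $-2N,\ 2N$ entries), the ``telescoping'' that would vanish for a genuinely skew $C$ instead collects at the last node, producing the $\tr\tr^\top$ term with coefficient $N\bp_N$. I would verify this by writing out $(\widetilde{\L}\boldsymbol{v})_k$, $(C\boldsymbol{f})_k$ and $(D\boldsymbol{f})_k = f_k - f_{k-1}$ explicitly and summing; the cross terms involving $\bp_{k+1}-\bp_k$ assemble into the matrix $O$, and the diagonal terms into $\widetilde{\L}$.

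For \eqref{eq:iden1}: This is the analogous computation on the ``mass'' side. Put $\boldsymbol{w} = M^\top\boldsymbol{f}$; then $M\widetilde{\L}^{-1}M^\top\boldsymbol{f} = M\widetilde{\L}^{-1}\boldsymbol{w}$ and the left side is $h\inner{M\widetilde{\L}^{-1}\boldsymbol{w},C\boldsymbol{f}} = h\inner{\widetilde{\L}^{-1}\boldsymbol{w}, M^\top C\boldsymbol{f}}$. Again the crux is an algebraic identity for $M^\top C$ in terms of $M^\top$ and $O$, now yielding an \emph{inequality} rather than an equality — the discrepancy being a non-negative quadratic form that one discards, which is why \eqref{eq:iden1} has ``$\leq$'' while \eqref{eq:iden2} has ``$=$''. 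I would isolate that leftover term and argue it is $\geq 0$ (likely it is a sum of squares coming from the fact that $M$ averages adjacent nodes, so $M^\top C$ is not exactly the symmetrized object one wants but differs by something positive semidefinite). The boundary term $\tfrac{\ell\bpi_N}{2}\boldsymbol{f}^\top\tr\tr^\top\boldsymbol{f}$ again comes from the last row of $C$, using $\ell = Nh$ and $\bpi_N = 1/\bp_N$.

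The main obstacle, I expect, is bookkeeping the boundary contributions: both $M$ and $D$ interact with the last row of $C$ (the asymmetric $-2N,\ 2N$ entries), and one must carefully track which node the rank-one term $\tr\tr^\top$ lands on and with what coefficient — getting the factor exactly right ($N\bp_N/h$ in \eqref{eq:iden2}, $\ell\bpi_N/2$ in \eqref{eq:iden1}) requires matching the linear growth of $C$'s entries against the index $N$ at the right endpoint. A secondary difficulty is identifying precisely the non-negative form discarded in \eqref{eq:iden1}; I would handle this by computing $M^\top C + C^\top M$ (or the relevant combination) entrywise and recognizing the excess over $D^\top(\widetilde{\L}-O)D$-type terms as a manifestly positive semidefinite tridiagonal matrix. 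Once these structural identities are in hand, the rest is a routine summation-by-parts that I would not grind through in detail.
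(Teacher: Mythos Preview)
Your overall plan --- direct component-wise expansion, telescoping/summation by parts, with the asymmetric last row of $C$ producing the rank-one boundary contributions --- is exactly what the paper does, and your treatment of \eqref{eq:iden2} is on target.

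For \eqref{eq:iden1}, however, you have misidentified where the inequality comes from. You expect the ``$\leq$'' to arise from discarding an interior non-negative quadratic form (some positive-semidefinite tridiagonal excess in $M^\top C + C^\top M$). In fact the interior computation is an \emph{exact identity}: after writing $f_{j+1}-f_{j-1}=(f_j+f_{j+1})-(f_j+f_{j-1})$ and telescoping, one obtains precisely
\[
h\inner{M\widetilde{\L}^{-1}M^\top\boldsymbol{f},C\boldsymbol{f}}
=-\frac{h}{2}\boldsymbol{f}^\top M\widetilde{\L}^{-1}(\widetilde{\L}-O)\widetilde{\L}^{-1}M^\top\boldsymbol{f}
+\frac{h}{8}\Bigl[4N\bpi_N(f_N+f_{N-1})f_N - N\bpi_N(f_{N-1}+f_N)^2\Bigr].
\]
The leftover bracket is purely a boundary object in $f_{N-1},f_N$, and it is \emph{not} a sum of squares --- indeed it is indefinite (take $f_N=0$, $f_{N-1}\neq 0$ and it is negative). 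The inequality is obtained by applying Young's inequality to the cross term, $4N\bpi_N(f_N+f_{N-1})f_N\le N\bpi_N(f_N+f_{N-1})^2+4N\bpi_N f_N^2$, which cancels the negative square and leaves $\tfrac{hN}{2}\bpi_N f_N^2=\tfrac{\ell\bpi_N}{2}\boldsymbol{f}^\top\tr\tr^\top\boldsymbol{f}$. So your proposed search for an interior PSD tridiagonal remainder would come up empty; the entire mechanism of the inequality lives at the right endpoint and requires a Young-type estimate, not a sign argument.
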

\begin{proof}
See the appendix section.
\end{proof}

\begin{theorem}\label{thm:1}     
Consider $\xx\in\{p,q\}$ and $i\in\{1,\dots,n\}$. Let $\L_{\xx i}$ and $O_{\xx i}$ be matrices constructed as at \eqref{eq:Qxi} and \eqref{eq:Opi}, respectively, from the values of $\bp^{\xx}_{i}(\x)$ at discretization nodes.     If there exists a positive constant $\delta^d$, independent of $N$, such that for every  parameter $\bp_i^{\xx}$, the corresponding matrices $\L_{\xx i}$ and $O_{\xx i}$ satisfy the inequality
     \begin{align}
         \L_{\xx i}-O_{\xx i}\geq \delta^d \L_{\xx i}.\label{eq:IneqCond}
     \end{align}
     Then, $\Ha_d(t)$ decays exponentially at least with a uniform rate of $\alpha^d=\dfrac{\delta^d\epsilon \epsilon_0}{\epsilon+\epsilon_0}$ for any $\epsilon\in(0,\min\{\epsilon_0,\epsilon_1\}]$, where $\epsilon_0=\dfrac{\eta_\L}{\ell \mu_{P_1}}$ and $\epsilon_1=\dfrac{2\eta_K}{\ell \mu_\Psi}$.
\end{theorem}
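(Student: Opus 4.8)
The plan is to verify the two conditions (C1) and (C2) for the Lyapunov function $V(t)=\Ha_d(t)+\epsilon V_\epsilon(t)$ and then combine them in the classical multiplier fashion. For (C1), I would start from the definition \eqref{eq:Ve} of $V_\epsilon$, apply Cauchy--Schwarz to the inner product, and use Lemma \ref{lem:4}: the factor $\begin{bmatrix} 0 & \Ah\\\Ah^\top & 0\end{bmatrix}^{-1}$ contributes $\mu_{P_1}=\sqrt{\max eig(P_1^{-2})}$, the block-diagonal multiplier $\mathrm{diag}(\Wh^\top,\Wh)$ contributes $\ell$ (since $\|\Wh\boldsymbol g\|\le\ell\|\boldsymbol g\|$), and the remaining $\|S_d[\ev^q;\ev^p]\|^2$ is controlled by $\Ha_d$ via $\Ha_d=\tfrac12\inner{S_d(\cdot),Q_d(\cdot)}$ together with $Q_d=\Lh\, S_d$ where $\Lh=\mathrm{diag}(\Lh_q,\Lh_p)$, so that $\|S_d(\cdot)\|^2\le \Ha_d/\eta_\L$ up to the factor $h$ that cancels with the $h$ in \eqref{eq:Ve}. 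Collecting constants gives exactly $|V_\epsilon|\le \tfrac{1}{\epsilon_0}\Ha_d$ with $\epsilon_0=\eta_\L/(\ell\mu_{P_1})$.

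For (C2), the main computation is to differentiate $V_\epsilon$ along trajectories of \eqref{eq:PHSd2} with $\uv=0$. The point of the special multiplier matrix $\Wh=-h\Dh^{-\top}\Ch^\top\Mh$ (and the block structure with $\begin{bmatrix}0&\Ah\\\Ah^\top&0\end{bmatrix}^{-1}$) is that, after substituting $S_d\dot{\ev}=(J_d-R_d)Q_d\ev$ and simplifying with the identities (B1)--(B7) of Lemma \ref{lem:MatrixProp} and the commutation $\Ah^{-\top}\Wh=\Wh\Ah^{-\top}$, the cross terms telescope and $\dot V_\epsilon$ decomposes — componentwise in $i$ and in $\xx\in\{p,q\}$ — into expressions of exactly the form appearing in Lemma \ref{lem:inner}. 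Concretely, the ``$M\widetilde\L^{-1}M^\top$ against $C$'' term and the ``$D^\top\widetilde\L D$ against $C$'' term are produced with $\widetilde\L=\L_{\xx i}$, $O=O_{\xx i}$; applying \eqref{eq:iden1}--\eqref{eq:iden2} replaces these by $-\tfrac12$ (a quadratic form in $\L_{\xx i}-O_{\xx i}$) plus boundary terms supported on $\tr\tr^\top$. Using hypothesis \eqref{eq:IneqCond}, $\L_{\xx i}-O_{\xx i}\ge\delta^d\L_{\xx i}$, the negative quadratic forms are bounded above by $-\delta^d$ times the corresponding pieces of $\Ha_d$ (reconstructing $\Ha_d$ from its diagonal-in-$(i,\xx)$ blocks), while the boundary terms involving $\tr\tr^\top$ and $\Bh_2=K\otimes\tr\tr^\top$ assemble — after bounding $\bpi_N\le 1/\eta_\L$ appropriately and using $\mu_\Psi$, $\eta_K$ — into a multiple of $\dot{\Ha}_d$, the boundary dissipation term in Proposition \ref{prop:PHS}. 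Matching coefficients yields $\dot V_\epsilon\le -\delta^d\Ha_d-\tfrac{1}{\epsilon_1}\dot{\Ha}_d$ with $\epsilon_1=2\eta_K/(\ell\mu_\Psi)$, which is (C2).

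Finally, I would combine (C1) and (C2) in the standard way. From (C1), for $\epsilon\in(0,\epsilon_0)$ we get $(1-\epsilon/\epsilon_0)\Ha_d\le V\le(1+\epsilon/\epsilon_0)\Ha_d$, so $V$ is equivalent to $\Ha_d$; from (C2) and $\dot{\Ha}_d\le 0$,
\begin{align*}
\dot V=\dot{\Ha}_d+\epsilon\dot V_\epsilon\le \Bigl(1-\tfrac{\epsilon}{\epsilon_1}\Bigr)\dot{\Ha}_d-\epsilon\delta^d\Ha_d\le -\epsilon\delta^d\Ha_d,
\end{align*}
valid for $\epsilon\le\epsilon_1$. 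Combining, $\dot V\le -\dfrac{\epsilon\delta^d}{1+\epsilon/\epsilon_0}V=-\dfrac{\delta^d\epsilon\epsilon_0}{\epsilon+\epsilon_0}V$, and Gronwall plus the two-sided bound transfers the decay back to $\Ha_d$, giving the uniform rate $\alpha^d=\dfrac{\delta^d\epsilon\epsilon_0}{\epsilon+\epsilon_0}$ since $\epsilon_0,\epsilon_1,\delta^d$ are all independent of $N$. The main obstacle I anticipate is the bookkeeping in (C2): carefully pushing the block-Kronecker matrices through the derivative of $V_\epsilon$, confirming that every cross term either cancels or lands in the precise template of Lemma \ref{lem:inner}, and correctly tracking the boundary ($\tr\tr^\top$) contributions so that they recombine into exactly $-\tfrac{1}{\epsilon_1}\dot{\Ha}_d$ with the stated constant rather than merely a bounded multiple of it.
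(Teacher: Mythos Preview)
Your proposal is correct and follows essentially the same route as the paper: (C1) via Cauchy--Schwarz and Lemma~\ref{lem:4}, (C2) via the matrix identities of Lemma~\ref{lem:MatrixProp} feeding into Lemma~\ref{lem:inner} under hypothesis~\eqref{eq:IneqCond}, and then the standard multiplier combination. The one concrete device you do not name but will want for the bookkeeping you anticipate is the change of variables~\eqref{eq:Tranformation} to $(\ub,\vb)$, which is exactly what reduces $\dot V_\epsilon$ to the three clean pieces \eqref{eq:inn1}--\eqref{eq:inn3} (the cross term \eqref{eq:inn3} handled by Young's inequality so that its $\Lh_q(\xr)$ part cancels the unwanted boundary term from \eqref{eq:inn2}, and its remainder combines with \eqref{eq:inn1} to produce the $\Psi$ quadratic form and hence the constant $\epsilon_1$).
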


\begin{proof}

    Condition (C1) can be verified using the Cauchy-Schwartz inequality and Lemma \ref{lem:4} as follows 
\begin{align*}
     |V_{\epsilon}(t)|=&\dfrac{h}{2}\left|\inner{S_d\begin{bmatrix}
        \ev^q(t) \\ \ev^p(t)
    \end{bmatrix},\begin{bmatrix}
        0 & \Ah \\ \Ah^\top & 0
    \end{bmatrix}^{-1}\begin{bmatrix}
        \Wh^\top & 0 \\ 0 & \Wh
    \end{bmatrix} S_d\begin{bmatrix}
        \ev^q(t) \\ \ev^p(t)
    \end{bmatrix}}\right| ,\\
    \leq& \dfrac{h}{2} \left\| S_d\begin{bmatrix}
        \ev^q(t)\\ \ev^p(t)
    \end{bmatrix} \right\| \left\| \begin{bmatrix}
        0 & \Ah\\ \Ah^\top & 0
    \end{bmatrix}^{-1}\begin{bmatrix}
        \Wh^\top & 0 \\ 0 & \Wh
    \end{bmatrix} S_d\begin{bmatrix}
        \ev^q(t) \\ \ev^p(t)
    \end{bmatrix}\right\|, \\
    \leq& \dfrac{h \ell \mu_{P_1} }{2} \left\| S_d\begin{bmatrix}
        \ev^q(t)\\ \ev^p(t)
    \end{bmatrix} \right\|^2 
    =\dfrac{\ell \mu_{P_1}}{\eta_{Q}}\left(\dfrac{h \eta_Q}{2} \left\| S_d\begin{bmatrix}
        \ev^q(t) \\ \ev^p(t)
    \end{bmatrix} \right\|^2\right).\nonumber
\end{align*}

From \eqref{eq:Qxi} we have $\disp\min \eig \left(\L_{\xx i}\right)=\min_{j\in[1,N]} \theta^{\xx}_{i,j}\geq \min_{\x\in[\xl,\xr]} \bp^\xx_i(\x)$. As a consequence, $\disp \min \eig \left(\begin{bmatrix}
    \Lh_q & 0 \\ 0 & \Lh_p
\end{bmatrix} \right)\geq \min_{\x\in[\xl,\xr] } eig( \L(\x) )=\eta_Q$. Then, $\Ha_d(t)=\dfrac{h}{2}\inner{ S_d, \begin{bmatrix}
        \ev^q(t) \\ \ev^p(t)
    \end{bmatrix}, \begin{bmatrix}
        \Lh_q & 0 \\ 0 & \Lh_p
    \end{bmatrix} S_d \begin{bmatrix}
        \ev^q(t) \\ \ev^p(t)
    \end{bmatrix} } \geq \dfrac{h}{2} \eta_{Q} \left\| S_d\begin{bmatrix}
        \ev^q(t) \\ \ev^p(t)
    \end{bmatrix} \right\|^2$.
This implies that $|V_\epsilon(t)|\leq \dfrac{\ell \mu_{P_1}}{\eta_Q}\Ha_d(t)$, that is, \eqref{eq:VeCond1} holds with
$\epsilon_0=\dfrac{\eta_\L}{\ell \mu_{P_1}}.$

On the other hand, analyzing the time derivative of $V_\epsilon(t)$ using \eqref{eq:QdSd}, \eqref{eq:Wh} and Lemma \ref{lem:MatrixProp} identities, we obtain
\begin{align*}
    \dot{V}_{\epsilon}(t)=&- h\inner{S_d\begin{bmatrix}
        \ev^q(t)\\ \ev^p(t)
    \end{bmatrix},\begin{bmatrix}
        \Wh & 0 \\ 0 & \Wh^\top
    \end{bmatrix} \begin{bmatrix}
        0 & \Ah\\ \Ah^\top & 0
    \end{bmatrix}^{-1} S_d\begin{bmatrix}
        \dot{\ev}^q(t)\\ \dot{\ev}^p(t)
    \end{bmatrix}},\\
    =&- \frac{1}{h}\inner{S_d\begin{bmatrix}
        \ev^q(t)\\ \ev^p(t)
    \end{bmatrix}, \begin{bmatrix}
        0 & \Wh \Ah^{-\top}\\\Wh^\top \Ah^{-1} & 0
    \end{bmatrix}  \begin{bmatrix}
      0 &   \Ah \Dh \Mh^{-\top}\\
      -  \Mh^{-1} \Dh^\top \Ah^\top  & - \Mh^{-1}\Bh_2\Mh^{-\top} 
    \end{bmatrix} Q_d\begin{bmatrix}
        {\ev}^q(t)\\\ev^p(t)
    \end{bmatrix}},\\
    =&- \inner{S_d\begin{bmatrix}
        \ev^q(t)\\ \ev^p(t)
    \end{bmatrix}, \begin{bmatrix}
       h\Dh^{-\top}\Ch^\top  \Dh^\top \Lh_q & 2h\Dh^{-\top}\Ch^\top \Bh_1\Mh^{-\top} \Lh_p\\ 0 & -h\Mh^\top\Ch  \Mh^{-\top} \Lh_p
    \end{bmatrix}  S_d\begin{bmatrix}
        {\ev}^q(t)\\\ev^p(t)
    \end{bmatrix}}.
\end{align*}

Define $\ub(t)=[\ub_1^\T(t),\ldots,\ub_n^\T(t)]^\top$ and $\vb(t)=[\vb_1^\T(t),\ldots,\vb_n^\T(t)]^\top$ with $\ub_i(t),\vb_i(t)\in\mathbb{R}^{N}$ for all $i\in\{1,\dots,n\}$.
Considering the transformation
\begin{align}
    \begin{bmatrix}
        \dfrac{\Dh}{h} & 0 \\0 & \Lh_p^{-1} \Mh^\top
    \end{bmatrix}\begin{bmatrix}
        \ub (t) \\ \vb(t)
    \end{bmatrix} = & S_d\begin{bmatrix}
        {\ev}^q(t)\\\ev^p(t)
    \end{bmatrix},\label{eq:Tranformation}
\end{align}
the discrete energy is rewritten as $\Ha_d(t)=\dfrac{1}{2h}\ub^\T (t) \Dh^\top\Lh_q \Dh \ub(t)+\dfrac{h}{2}\vb^\top(t) \Mh\Lh_{p}^{-1}\Mh^\top \vb(t)$, $\dot{\Ha}_d(t)=-\vb^{\top}(t) (I_n\otimes\tr) K (I_n\otimes\tr^{\top}) \vb(t)$, and $\dot{V}_\epsilon (t)$ is expressed as
\begin{align}
    \dot{V}_\epsilon (t)=&h\inner{\Mh \Lh_{p}^{-1} \Mh^\top \vb(t),\Ch\vb(t)} -\dfrac{1}{h}\inner{  \Dh^\top \Lh_q \Dh \ub(t), \Ch\ub(t)} - 2\inner{\Ch \ub(t),\Bh_1\vb(t)} \label{eq:dVe}
\end{align}

Since $(I_n\otimes \tr^\top)\Ch\ub(t)= N (I_n\otimes \tr^\top)\Dh\ub(t)$ and $\L_{\xx}(\xr)=diag(\bp^{\xx}_{1,N},\dots \bp^{\xx}_{n,N})$, if \eqref{eq:IneqCond} holds, then from  Lemma \ref{lem:inner} the inner products $\inner{\Mh \Lh_{p}^{-1} \Mh^\top \vb(t),\Ch\vb(t)}=\disp \sum_{i=1}^{n}h\inner{\M \L_{pi}^{-1} \M^\top \vb_i(t),\C\vb_i(t)}$, $\inner{\Dh^\top \Lh_q \Dh \ub(t), \Ch\ub(t)}=\disp\sum_{i=1}^{n}\inner{\D^\top \L_{qi} \D \ub_i(t), \C^\top\ub_i(t)}$, and $2\inner{\Ch\ub(t),\Bh_1\vb(t)}=\inner{\Ch\ub(t),(I_n\otimes \tr) A^{-T}K(I_n\otimes \tr^\top)\vb(t)}$ lead us
 \begin{align}
    \sum_{i=1}^{n}h\inner{\M \L_{pi}^{-1} \M^\top \vb_i(t),\C\vb_i(t)}
    =& \sum_{i=1}^{n}
      -\dfrac{h}{2}\vb_i^\top(t) M \L_{pi}^{-1}\left( \L_{pi}-O_{pi}\right) \L_{p}^{-1}M^\top \vb_i(t)     +\dfrac{\ell\bpi_{i,N}^p}{2 } \vb_i^\top (t) \tr \tr^\top \vb_i(t),\nonumber\\
    \leq  & \sum_{i=1}^{n}
      -\dfrac{h\delta^d}{2}\vb_i^\top (t) M \L_{pi}^{-1}M^\top \vb_i (t)     +\dfrac{\ell\bpi_{i,N}^p}{2 } \vb_i^\top(t) \tr \tr^\top \vb_i(t),\nonumber\\
    \leq&   -\dfrac{h \delta^d}{2}\vb^\top(t) \Mh \Lh_{p}^{-1}\Mh^\top \vb(t)    +\dfrac{\ell }{2 } \vb^\top (t)(I_n\otimes \tr) \L^{-1}_p(\xr) (I_n\otimes\tr^\top) \vb(t), \label{eq:inn1}\\
     -\sum_{i=1}^{n}\dfrac{1}{h}\inner{\D^\top \L_{qi} \D \ub_i(t), \C^\top\ub_i(t)}
    =&
     -\dfrac{1}{2h}\sum_{i=1}^{n}\ub_i^\top (t) D^\top \left(\L_{qi}-O_{qi}\right) D\ub_i (t)+N\bp_{i,N}^q \ub_i^\top (t)D^\top \tr\tr^\top D \ub_i(t)\nonumber\\
     \leq&
     -\dfrac{1}{2h}\sum_{i=1}^{n}\delta^d\ub_i^\top(t) D^\top \L_{qi} D\ub_i(t) +N\bp_{i,N}^q \ub_i^\top(t) D^\top \tr\tr^\top D \ub_i(t)\nonumber\\
     \leq&
     -\dfrac{\delta^d\ub^\top (t)\Dh^\top \Lh_{q} \Dh\ub(t) -N \ub^\top (t)\Dh^\top (I_n\otimes\tr) \Lh_{q}(\xr)(I_n\otimes\tr^\top) \Dh \ub(t)}{2h}
    ,\label{eq:inn2}
\end{align}
and
\begin{align}
    -\inner{\Ch\ub(t),(I_n\otimes \tr) A^{-T}K(I_n\otimes \tr^\top)\vb(t)}= & -N\inner{(I_n\otimes \tr^\top)\Dh\ub(t), A^{-T}K(I_n\otimes \tr^\top)\vb(t)} \nonumber\\
    \leq& \dfrac{N}{2h} \ub^\T(t) \Dh^\top (I_n\otimes \tr) \L_q(\xr) (I_n\otimes \tr^\top)\Dh \ub(t) \nonumber\\
    &+ \dfrac{\ell}{2} \vb^\T (t) (I_n\otimes \tr) K A^{-1} \L^{-1}_q(\xr) A^{-\top}K(I_n\otimes \tr^\top)\vb(t),\label{eq:inn3}
\end{align}
respectively.
Substituting \eqref{eq:inn1}--\eqref{eq:inn3} into \eqref{eq:dVe}, the time derivative of the multiplier function is bounded as
\begin{align}
    \dot{V}_\epsilon (t)\leq& -\dfrac{h\delta^d}{2}\vb^\top (t) \Mh\Lh_p^{-1}\Mh^\top \vb(t)  -\dfrac{\delta^d}{2h}\ub^\top(t)  \Dh^\top \Lh_q \Dh  \ub(t)\nonumber\\
    & +\dfrac{\ell}{2 } \vb^\top(t) \left( I_n\otimes\tr\right) \L^{-1}_p(\xr) \left( I_n\otimes\tr^\top\right) \vb(t) + \dfrac{\ell}{2} \vb(t) (I_n\otimes \tr) K A^{-1} \L^{-1}_q(\xr) A^{-\top}K(I_n\otimes \tr^\top)\vb(t)\nonumber\\
    \leq& -\delta^d \Ha_d(t)+ \dfrac{\ell}{2 } \vb^\top(t) \left( I_n\otimes\tr\right) \Psi \left( I_n\otimes\tr^\top\right) \vb (t)\nonumber\\
    \leq&  -\delta^d \Ha_d(t)-\dfrac{\ell \mu_\Psi}{2\eta_K}\dot{\Ha}_d(t)
\end{align}




This implies that if the inequality \eqref{eq:IneqCond} holds, then, condition (C2) is satisfied with $\epsilon_1=\dfrac{2\eta_K}{\ell \mu_\Psi}$, guarantying that the energy $H_d$ of model \eqref{eq:PHS_3} decay  exponentially with a rate of at least $\alpha^d=\dfrac{\delta^d \epsilon\epsilon_0}{\epsilon+\epsilon_0}$, $\forall \epsilon\in(0,\min\{\epsilon_0,\epsilon_1\}]$. 
\end{proof}

    Note that the bound on numerical approximation's exponential decay rate, $\alpha^d$, equals the exponential decay rate for the continuous system if $\delta^d=\delta^c$.

\subsection{Physical parameters: required properties.}


The question remains of determining when  inequality \eqref{eq:IneqCond} holds.
Note that \eqref{eq:IneqCond}, is equivalent to  that $(1-\delta^d)\L_{\xx i}-O_{\xx i}$ is a semi-definite positive matrix. It is the discrete analogue of Assumption 1. 

If for each pair of matrices $(\L_{\xx i},O_{\xx i})$ associated with parameter $\bp_i^{\xx}(\x)$, there is a $c>0$, independent of $N$, such that 
\begin{align}
    (1-c)\L_{\xx i}-O_{\xx i}>0,\label{eq:cond1}
\end{align}
then, $\delta^d$ is defined as the maximum $c$ such that $\eqref{eq:cond1}$ holds for every pair $(\L_{\xx i},O_{\xx i})$.
 From \eqref{eq:Qxi} and \eqref{eq:Opi} we have that $(1-c)\L_{\xx i}-O_{\xx i}$ is a tridiagonal matrix that depends on $c$ and the physical parameter $\bp_i^\xx(\x)$ evaluated at the  nodes. The positiveness of tridiagonal matrices has been widely studied in  the literature \cite{Andelic2011, Bernstein2018, Chien1998, Ismail1991, Ismail1992, Johnson1996}. This

 \begin{theorem}\label{thm:LargeN}
      For a sufficiently large number  $N$ of elements in the discretization, Assumption \ref{assump:1} implies that \eqref{eq:cond1} holds for every parameter $\theta_i^p, \theta_i^q$, $i=1..n.$
 \end{theorem}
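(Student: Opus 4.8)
The plan is to recognise \eqref{eq:cond1} as the positive definiteness of a single explicit symmetric tridiagonal matrix for each physical parameter, and then to obtain that positivity, uniformly in $N$, from a classical tridiagonal criterion fed with Assumption \ref{assump:1} evaluated at the mesh nodes.

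\emph{Reduction.} Fix $\xx\in\{p,q\}$ and $i$; write $\theta=\theta_i^{\xx}$, $\theta_k=\theta(\x_k)$, and set $T_c=(1-c)\L_{\xx i}-O_{\xx i}$. By \eqref{eq:Qxi}--\eqref{eq:Opi}, $T_c$ is symmetric tridiagonal with diagonal entries $(1-c)\theta_k>0$ and off-diagonal entries $(T_c)_{k,k+1}=-\tfrac12 k(\theta_{k+1}-\theta_k)$, $k=1,\dots,N-1$; hence \eqref{eq:cond1} is exactly the statement ``$T_c\succ0$ for some $c>0$ that may depend on $\theta$ but not on $N$''. Since there are only $2n$ parameters, it suffices to produce such a $c$ for one $\theta$ and then take the smallest over the finite family.

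\emph{Passage to the continuous inequality.} Because $\x_k-\xl=kh=m(\x_k)$, the mean value theorem gives $k(\theta_{k+1}-\theta_k)=m(\x_k)\theta'(\xi_k)$ with $\xi_k\in(\x_k,\x_{k+1})$ and $(k-1)(\theta_k-\theta_{k-1})=m(\x_{k-1})\theta'(\eta_k)$ with $\eta_k\in(\x_{k-1},\x_k)$; this is what tames the growing prefactor $k$. By uniform continuity of $\theta'$ on the compact interval $[\xl,\xr]$, for each $\varepsilon>0$ there is $N_\varepsilon$ such that for $N\ge N_\varepsilon$ one has $|\theta'(\xi_k)-\theta'(\x_k)|\le\varepsilon$, $|\theta'(\eta_k)-\theta'(\x_k)|\le\varepsilon$ and $h\le\varepsilon$, uniformly in $k$ — this is exactly where the hypothesis ``$N$ sufficiently large'' enters, since on coarse meshes $k(\theta_{k+1}-\theta_k)$ need not approximate $m(\x_k)\theta'(\x_k)$ at all. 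Now apply a sufficient condition for positive definiteness of a symmetric tridiagonal matrix with positive diagonal (row or scaled-row diagonal dominance, or positivity of the associated three-term $LDL^{\top}$/Sturm recurrence, cf.\ the references cited after \eqref{eq:cond1}). Along the plain-dominance route the interior-row requirement $(1-c)\theta_k>\tfrac12(k-1)|\theta_k-\theta_{k-1}|+\tfrac12 k|\theta_{k+1}-\theta_k|$ reduces, via the estimates above, to $(1-c)\theta(\x_k)>m(\x_k)|\theta'(\x_k)|+C\varepsilon$ with $C$ depending only on $\ell$ and $\|\theta'\|_\infty$ (the two boundary rows are analogous and easier, since $m(\x_1)=h$). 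Wherever $\theta'(\x_k)\ge0$ this is exactly Assumption \ref{assump:1}, $m(\x)\theta'(\x)<(1-\delta^c)\theta(\x)$: taking $c=\delta^c/2$ and $\varepsilon<\tfrac{\delta^c}{2\ell}\min_{[\xl,\xr]}\theta$ gives the inequality for all such $k$ once $N\ge N_\varepsilon$.

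\emph{Main obstacle.} The delicate part is the set of nodes where $\theta$ is decreasing: there the dominance estimate calls for $m(\x_k)\bigl(-\theta'(\x_k)\bigr)$ to stay a fixed fraction below $\theta(\x_k)$, which is a lower bound on $m\theta'$ — the counterpart of, but not contained in, the upper bound furnished by Assumption \ref{assump:1}. On those subintervals plain diagonal dominance is too crude, and I would fall back on the finer tridiagonal-positivity criteria of the cited references: positivity of the leading principal minors through their three-term recurrence, or a scaled dominance with node weights comparable to $1/\theta_k$ arranged so that the off-diagonal contributions telescope, followed by a proof that the resulting scaling stays bounded uniformly in $N$. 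Establishing that uniform bound is the technical heart of the argument, and it is where the strictness of the inequality in Assumption \ref{assump:1} together with the compactness of $[\xl,\xr]$ are really used. Once it is in hand, combining the interior estimate with the boundary rows yields $T_c\succ0$ with $c$ independent of $N$, and minimising $c$ over the $2n$ parameters gives \eqref{eq:cond1}.
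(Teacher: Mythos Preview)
Your reduction and overall strategy coincide with the paper's: both pass from Assumption~\ref{assump:1} to the node inequality $(1-c)\theta_k>k(\theta_{k+1}-\theta_k)$ via a difference-quotient approximation of $\theta'$, and both then aim for positive definiteness of the tridiagonal matrix $T_c=(1-c)\L_{\xx i}-O_{\xx i}$ through diagonal dominance. The paper's argument is in fact terser than yours --- it simply asserts that for $N$ large the one-sided inequality $(1-c)\theta_i>i(\theta_{i+1}-\theta_i)$ holds and that ``this implies'' diagonal dominance --- without the mean-value-theorem and uniform-continuity bookkeeping you supply.

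You have, however, correctly identified a gap that the paper's own proof glosses over. Row dominance at an interior node requires
\[
(1-c)\theta_k>\tfrac12(k-1)\lvert\theta_k-\theta_{k-1}\rvert+\tfrac12 k\lvert\theta_{k+1}-\theta_k\rvert,
\]
a \emph{two-sided} bound on the discrete analogue of $m(\x)\theta'(\x)$, whereas Assumption~\ref{assump:1} supplies only the upper bound $m(\x)\theta'(\x)<(1-\delta^c)\theta(\x)$; on intervals where $\theta'<0$ this says nothing about $m(\x)\lvert\theta'(\x)\rvert$, and the one-sided node inequality does not by itself yield dominance there. Your ``main obstacle'' paragraph names exactly this difficulty, and the paper simply jumps from the one-sided inequality to the dominance claim without addressing it. Your proposal is therefore incomplete at the same point the paper's proof is: you sketch a fallback (Sturm recurrence, weighted dominance with node-dependent scaling) but do not carry it through or exhibit the uniform-in-$N$ bound you acknowledge is the technical heart. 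The gap remains open in both write-ups.
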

 \begin{proof}
    Let $h=\dfrac{\ell}{N}$ be the mesh size, $\x_i=ih+\xl$ be the $i$-th node of the spatial discretization, and $\bp(\x)$ be a physical parameter with  $\bp_i=\bp(\x_i)$.
     From Assumption \ref{assump:1}, we have that for all $\x\in[\xl,\xr]$  the inequality   
     $$(1-\delta^c)\bp(\x)>(\x-\xl)\bp'(\x)$$
     holds for every $\bp(\x)\in\Theta$. Considering  the approximation $\bp'(\x_i)\approx \dfrac{\bp_{i+1}-\bp_i}{h}$   and $\x_i-\xl=ih$. Then, for an $N$ large enough the inequality $$(1-c)\bp_i>i(\bp_{i+1}-\bp_i), \quad\forall i\{1,\dots,N\}$$
     holds for some $c\in(0,\delta^c]$. This implies that matrix $(1-c)\L_{\xx i}-O_{\xx i}$ is diagonal dominant with strictly positive terms in the main diagonal. Consequently, $(1-c)\L_{\xx i}-O_{\xx i}$ is definite positive.
 \end{proof}

In some cases the required inequality holds regardless of the mesh size.
\begin{theorem}\label{thm:2}
    Let $\bp(\x)$ be a positive smooth function on interval $[\xl,\xr]$, with $\bp_i$ denoting the values of $\bp(\x)$ at node $\x_i$, $\bp_i=\bp(\x_i)$. $O$ and $\widetilde{Q}$ be the corresponding matrices as defined in Lemma \ref{lem:inner}. If $\bp$ satisfy one of the following conditions
    \begin{itemize}
        \item[\rm (D1)] $\dfrac{i^2\left(\bp_{i+1} -\bp_{i}\right)^2}{(1-c)^2\bp_{i+1}\bp_{i}} < \dfrac{1}{\cos^2\left(\dfrac{\pi}{N+1}\right)}$ for every $i\in\{1,\dots,N-1\}$,

        \item[\rm (D2)] $\disp \frac{8}{N(N-1)} \min_{i=1,3,\dots}\{ (1-c)\bp_{i}\} \min_{i=2,4,\dots}\{(1-c)\bp_{i}\}-\sum_{i <k} (1-c)^2 \left(\bp{i}-\bp_{k}\right)^2 <\sum_{i=1}^{N-1}i^2\left(\bp^\xx_{i+1}-\bp^\xx_{i}\right)^2$,

    \end{itemize}
    for some $0<c <1$, then, matrices $\widetilde{\L}$ and $O$ satisfy $(1-c)\widetilde{\L}-O >0$.
\end{theorem}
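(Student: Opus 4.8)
The quantity to be controlled is the symmetric tridiagonal $N\times N$ matrix $A:=(1-c)\widetilde{\Theta}-O$ of Lemma~\ref{lem:inner}: its diagonal entries $A_{kk}=(1-c)\theta_k$ are strictly positive because $0<c<1$ and $\theta_k>0$, and its sub/super-diagonal entries are $A_{k,k+1}=A_{k+1,k}=-\tfrac{k}{2}(\theta_{k+1}-\theta_k)$. The claim is $A\succ 0$, and the plan is to treat the two alternative hypotheses by two independent classical certificates for positive definiteness of symmetric tridiagonal matrices.

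Under (D1) the plan is to first scale out the diagonal: with $D=\mathrm{diag}\big(\sqrt{(1-c)\theta_1},\dots,\sqrt{(1-c)\theta_N}\big)$, the congruent matrix $\widehat{A}:=D^{-1}AD^{-1}$ has unit diagonal and off-diagonal entries $\widehat{A}_{k,k+1}=-\tfrac{k(\theta_{k+1}-\theta_k)}{2(1-c)\sqrt{\theta_k\theta_{k+1}}}$, so that (D1) reads exactly $\widehat{A}_{k,k+1}^2<\tfrac{1}{4\cos^2(\pi/(N+1))}$ for every $k$, whence $\beta:=\max_k|\widehat{A}_{k,k+1}|<\tfrac{1}{2\cos(\pi/(N+1))}$, strictly (the maximum being over finitely many indices). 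Write $\widehat{A}=I-E$ with $E$ symmetric, zero on the diagonal, and with off-diagonal entries bounded in modulus by $\beta$; comparing $E$ with the $N\times N$ matrix $T_N$ that has zeros on the diagonal and ones on both off-diagonals — whose eigenvalues are $2\cos\big(\tfrac{k\pi}{N+1}\big)$, $k=1,\dots,N$, with explicit sine eigenvectors — gives $|v^\top E v|\le\beta\,\lvert v\rvert^\top T_N\lvert v\rvert\le 2\beta\cos\big(\tfrac{\pi}{N+1}\big)\|v\|^2<\|v\|^2$ for every $v$, where $\lvert v\rvert$ denotes the entrywise modulus. Since $E$ is symmetric this yields $\|E\|<1$, hence $\lambda_{\min}(\widehat{A})\ge 1-\|E\|>0$ and $A=D\widehat{A}D\succ 0$ by Sylvester's law of inertia.

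Under (D2) the plan is to exploit that the path graph underlying a tridiagonal matrix is bipartite: partitioning $\{1,\dots,N\}$ into its odd- and even-indexed subsets, no two odd (and no two even) indices are adjacent, so after the corresponding symmetric permutation $A$ becomes the block matrix $\left[\begin{smallmatrix}G_1 & B\\ B^\top & G_2\end{smallmatrix}\right]$, with $G_1,G_2$ diagonal and strictly positive (their entries being the $(1-c)\theta_k$ for odd, respectively even, $k$) and $B$ collecting the couplings $A_{k,k+1}$, so that $\|B\|_F^2=\tfrac14\sum_{k=1}^{N-1}k^2(\theta_{k+1}-\theta_k)^2$. Since $G_1\succ 0$, the Haynsworth/Schur-complement criterion reduces $A\succ 0$ to positivity of $G_2-B^\top G_1^{-1}B$; using $B^\top G_1^{-1}B\preceq\big(\min_{k\text{ odd}}(1-c)\theta_k\big)^{-1}B^\top B$ and then bounding $G_2-\big(\min_{k\text{ odd}}(1-c)\theta_k\big)^{-1}B^\top B$ from below in terms of $\min_{k\text{ even}}(1-c)\theta_k$, the Frobenius bound on $B$ just stated, and a trace/variance refinement that brings in the global quantity $\sum_{i<k}(1-c)^2(\theta_i-\theta_k)^2$ and the combinatorial factor $\tfrac{8}{N(N-1)}$, one arrives precisely at inequality (D2); the sharp form of this last estimate is the positivity criterion for tridiagonal matrices recorded in \cite{Andelic2011,Chien1998,Ismail1991,Ismail1992,Johnson1996}, which may be quoted directly rather than re-derived.

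The delicate point under (D1) is that the comparison matrix $T_N$ has largest eigenvalue exactly equal to the threshold $2\cos(\pi/(N+1))$, so the argument sits on a boundary: one must use the genuine sine-vector spectrum of $T_N$ rather than a Gershgorin row-sum bound (which would only give the weaker threshold $2$ and would lose the $\cos(\pi/(N+1))$ factor), and the strictness of (D1) must be tracked through the chain of estimates — this is why one works with $\beta=\max_k|\widehat{A}_{k,k+1}|$ over the finite index set and records that it is \emph{strictly} below $\tfrac{1}{2\cos(\pi/(N+1))}$. Under (D2) the difficulty is not conceptual but one of bookkeeping: carrying the Schur complement through the two norm estimates while keeping exact track of constants so that the conclusion is literally the stated inequality — in particular the factor $\tfrac{8}{N(N-1)}$ and the cross term $\sum_{i<k}(1-c)^2(\theta_i-\theta_k)^2$ — is where the care is needed, and the safest route is to invoke the tridiagonal-positivity results from the references cited above verbatim.
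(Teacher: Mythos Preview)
Your proposal is correct. The paper's own proof is minimal: it simply records the tridiagonal structure of $(1-c)\widetilde{\Theta}-O$ and then cites \cite[Proposition~2.2]{Andelic2011}, \cite[Theorem~2.5 \& Proposition~2.5]{Johnson1996} for (D1) and \cite[Theorem~2.3]{Chien1998} for (D2), with no further argument.

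Your route is the same in spirit---both rest on known positivity criteria for symmetric tridiagonal matrices---but you actually unpack (D1): the diagonal scaling $D^{-1}AD^{-1}$ that normalizes the diagonal to $1$, followed by the comparison $|v^\top E v|\le \beta\,|v|^\top T_N|v|\le 2\beta\cos\!\big(\tfrac{\pi}{N+1}\big)\|v\|^2$ using the explicit spectrum of the path matrix $T_N$, is precisely the standard proof of the result the paper merely cites, and your observation that strictness survives because the maximum is over finitely many indices is exactly the point that makes the borderline threshold work. For (D2) your Schur-complement sketch is plausible but, as you note, the constants are the whole story, and you end up deferring to the same references the paper invokes; so there the two proofs coincide. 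The net difference is that your treatment of (D1) is self-contained and explains \emph{why} the $\cos(\pi/(N+1))$ factor appears, which the paper does not.
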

\begin{proof}
    Note that $(1-c)\widetilde{\L}-O$ is a symmetric tridiagonal matrix, 
    \begin{align*}
    (1-c)\widetilde{\L}-O=\begin{bmatrix}
        (1-c)\bp_{1} & -\frac{\bp_{2}-\bp_{1}}{2} & \\
        -\frac{\bp_{2}-\bp_{1}}{2} & \ddots  & & \\
            && (1-c)\bp_{i} & -\frac{j(\bp_{i+1}-\bp_{i})}{2} & \\
        &&-\frac{i(\bp_{i+1}-\bp_{i})}{2} & (1-c)\bp_{i+1}
        \\ &&&\ddots &\\
         &&& & -\frac{(N-1)(\bp_{N}-\bp_{N-1})}{2} & \\
        &&&-\frac{(N-1)(\bp_{N}-\bp_{N-1})}{2} & (1-c)\bp_{N}
    \end{bmatrix}
\end{align*}
with strictly positive elements in the main diagonal since $0<c<1$. According to \cite[Proposition 2.2]{Andelic2011} and \cite[Theorem 2.5 \& Proposition 2.5]{Johnson1996}, the inequality expressed in  (D1) is a sufficient condition to guarantee the positive definiteness of this matrix. Using the approach described in \cite[Theorem 2.3]{Chien1998}, we obtain the alternative sufficient condition shown in (D2).
As a consequence, if (D1) or (D2) holds, then $(1-c)\widetilde{\L}-O$ is definite positive.
\end{proof}
\begin{remark}\label{rmk:poss}
1. If a parameter is constant, then condition (D1) holds trivially. 

2.
     If condition {\rm(D1)} holds for some positive function $\bp(\x)$, then any function $\widetilde{\bp}(\x)$ of the form $\widetilde{\bp}(\x)=\gamma \bp(\x)$ or $\widetilde{\bp}(\x)=\dfrac{\gamma}{ \bp(\x)}$ where  $\gamma\in\mathbb{R}^+$ is a constant also satisfies this condition.
\end{remark}

If every physical parameter $\bp^\xx_i(\x)$ satisfies Theorem \ref{thm:2}, then condition \eqref{eq:IneqCond} in Theorem \ref{thm:1} holds, implying uniform conservation of the exponential decay rate.

\subsection{Example: Piezoelectric beam}\label{example:Piezo}

\begin{figure}
	\centering
	\includegraphics[width=0.75\columnwidth]{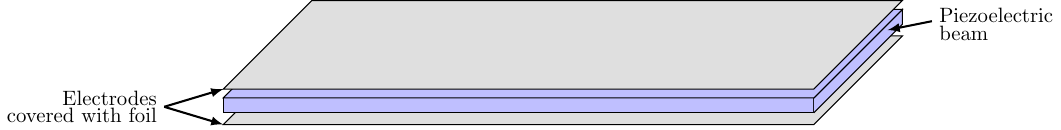}
	\caption{Piezoelectric beam}
	\label{fig:piezo}
\end{figure}

Consider the piezoelectric beam shown in Figure \ref{fig:piezo}.
Let $\rho_0$, $\alpha_0$, $\gamma$, $\mu_0$ and $\tau_0$ be the basic material density, elastic stiffness, piezoelectric coefficient, magnetic permeability, and dielectric permittivity of the beam and $\bp(\x)=\dfrac{10-\x}{10}$ be fundamental function that describes the spatial variations of the physical parameter on interval [0,1] due to changes in cross-section area, material compositions, etc. Then, the beam parameter are $\mu(\x)=\bp(\x)\mu_0$, $\rho(\x)=\bp(\x)\rho_0$, $\alpha(\x)=\bp(\x)\alpha_0$ and $\beta(\x)=\dfrac{\bp(\x)}{\tau_0}$, and the dynamics of the beam longitudinal displacement $v(\x,t)$ and electric charge $\phi(\x,t)$ are described through the following distributed-parameter system \cite{Morris2014}:
\begin{align}
	\begin{split}
		\pd{}{t}\left( \rho(\x)\pd{v(\x,t)}{t}\right)=&\pd{}{\x}\left((\alpha(\x) -\gamma^2 \beta(\x))\pd{v(\x,t)}{\x}\right)-\gamma\pd{}{\x}\left(\beta(\x)\pd{\phi(\x,t)}{\x}\right)\\
		\pd{}{t}\left( \mu(\x)\pd{\phi(\x,t)}{t}\right)=&\pd{}{\x}\left( \beta(\x)\pd{\phi(\x,t)}{\x}\right)-\gamma\pd{}{\x}\left(\beta(\x)\pd{v(\x,t)}{\x}\right)    
	\end{split}, \quad \forall \x\in[0,1]
\end{align}
subject to
\begin{align}
	v(0,t)=\phi(0,t)=0\\
	(\alpha(1)-\gamma^2\beta(1))\pd{v(1,t)}{\x}-\gamma\beta(1)\pd{\phi(1,t)}{\x}=&F(t)\\
	\beta(1)\pd{\phi(1,t)}{\x}-\gamma\beta(1)\pd{v(1,t)}{\x}=&V(t)
\end{align}
where $F(t)$ and $V(t)$ are the mechanical force and voltage applied at $\x=1$. Choosing $p_1(\x,t)=\rho(\x)\pd{v(\x,t)}{t}$, $p_2(\x,t)=\mu(\x)\pd{\phi(\x,t)}{t}$, $q_1(\x,t)=\pd{v(\x,t)}{\x}$ and $\displaystyle q_2(\x,t)=\pd{\phi(\x,t)}{\x}-\gamma\pd{v(\x,t)}{\x}$ we obtain the following port-Hamiltonian formulation:
\begin{align}
	\pd{}{t}\begin{bmatrix}
		q_1(\x,t)\\ q_2(\x,t)\\ p_1(\x,t) \\p_2(\x,t)
	\end{bmatrix}=&\begin{bmatrix}
		0 & 0 & 1 & 0\\
		0 & 0 & -\gamma & 1\\
		1 & -\gamma & 0 & 0\\
		0 & 1 & 0 & 0
	\end{bmatrix}\begin{bmatrix}
		\alpha(\x)q_1(\x,t)\\ \beta(\x)q_2(\x,t)\\ \dfrac{p_1(\x,t)}{\rho(\x)} \\ \dfrac{p_2(\x,t)}{\mu(\x)}
	\end{bmatrix}, \quad \forall \x\in[0,1]\label{eq:Pbeam1}\\
	\begin{split}
		p_1(0,t)=& p_2(0,t)=0\\
		\alpha(1)q_1(1,t)-&\gamma\beta(1)q_2(1,t)=F(t)\\
		\beta(1)q_2(1,t)=&V(t)
	\end{split}\label{eq:Pbeam2}
\end{align}

Considering the feedback 
$$\begin{bmatrix}
	F(t)\\V(t)
\end{bmatrix}=-\begin{bmatrix}
	k_1 & 0 \\. & k_2
\end{bmatrix}\begin{bmatrix}
	\dfrac{p_1(1,t)}{\rho(1)} \\ \dfrac{p_2(1,t)}{\mu(1)}
\end{bmatrix}$$
the piezoelectric beam behavior is described by \eqref{eq:PHS_2}--\eqref{eq:BC_a}, with
$A=\begin{bmatrix}
	1 & 0 \\-\gamma & 1
\end{bmatrix}$, $\bp^q_1(\x)=\bp(\x)\alpha_0$, $\bp^q_2(\x)=\dfrac{\bp(\x)}{\tau_0}$, $\bp^p_1(\x)=\dfrac{1}{\bp(\x)\rho_0}$ and $\bp^p_2(\x)=\dfrac{1}{\bp(\x)\mu_0}$. Note that all these physical parameters are proportional to the cross-section area $\bp(\x)$. Consequently, from \eqref{eq:1A}, is required that for all $\x$ on interval $[0,1]$, inequalities $\delta^c \leq \dfrac{\bp(\x)-m(\x) \bp'(\x)}{\bp(\x)} $ and $\delta^c\leq \dfrac{\bp(\x)+m(\x) \bp'(\x)}{\bp(\x)}$ hold simultaneously. Thus, Assumption \ref{assump:1} is satisfied defining 
\begin{align*}
	\disp\delta^c=&\inf_{\x\in[0,1]} \left\{\dfrac{\bp(\x)-m(\x) \bp'(\x)}{\bp(\x)}, \dfrac{\bp(\x)+m(\x) \bp'(\x)}{\bp(\x)} \right\}\\
	=&\inf_{\x\in[0,1]} \left\{\dfrac{10}{10-x}, \dfrac{10-2x}{10-x} \right\}\\
	=&\dfrac{8}{9}.
\end{align*}

On the other hand, given a uniform partition of the spatial domain, $h=\dfrac{1}{N}$, we get that
\begin{align}
	\bp_{i}=&\dfrac{10N-i}{10N}, \forall i\in\{0,\dots,N\}
\end{align}

Evaluating condition {(D1)} for $\bp(\x)$ we obtain that for every $i\in\{1,\dots,N-1\}$
\begin{align*}
	(1-c)^2 >& \cos^2\left(\dfrac{\pi}{N+1}\right) \dfrac{i^2(\bp_{i+1}-\bp_{i})^2}{ \bp_{i+1} \bp_{i}} \\
	>& \cos^2\left(\dfrac{\pi}{N+1}\right) \dfrac{i^2}{(10N-i-1) (10N-i)}
\end{align*}

Note that $\disp \dfrac{i^2}{(10N-i-1) (10N-i)} \leq \dfrac{(N-1)^2}{9 N (9N+1)}< \dfrac{1}{81}$ for any $i\in\{1,\dots,N\}$ and $\cos^2\left(\dfrac{\pi}{N+1}\right) < 1$  $\forall N$. Then, considering $(1-c)^2\geq \dfrac{1}{81}$,
condition {(D1)} holds for  $0< c \leq \dfrac{8}{9}$. Consequently, by Remark \ref{rmk:poss}, all physical parameters satisfy Theorem \ref{thm:2}. Thus, condition \eqref{eq:IneqCond} is satisfied with
$\delta^d=\sup c=\dfrac{8}{9}=\delta^c$, that is,, the mixed finite elements numerical method in Section \ref{sec:MFEM} lead us to a numerical approximation of \eqref{eq:Pbeam1}--\eqref{eq:Pbeam2} that have a strictly uniform conservation of the exponential decay rate, from the multiplier approach point of view.

\section{Application to linear quadratic control}


In this section we analyze the application of the mixed finite-element proposed for linear quadratic control, comparing the design results with those obtained from the standard finite-differences method.
 Consider the following anisotropic wave equation expressed as the co-energy formulation \eqref{eq:PHS_3}--\eqref{eq:BC_a2},
 \begin{align}
 	\begin{split}
 		  \begin{bmatrix}
 			\bp^q(\x) & 0\\0 & \bp^p(\x)
 		\end{bmatrix}^{-1}\pd{}{t}\begin{bmatrix}
 			e^q(\x,t)\\e^p(\x,t)
 		\end{bmatrix}-&\begin{bmatrix}
 			0 & 1\\1 & 0
 		\end{bmatrix}\pd{}{\x}\left(  \begin{bmatrix}
 			e^q(\x,t)\\e^p(\x,t)
 		\end{bmatrix} \right)=\begin{bmatrix}
 			0\\b(\x)
 		\end{bmatrix}u(t), \quad \forall \x\in[0,1] \\
 		e^p(0,t)=&0\\
 		e^q(1,t)=&-\kappa_1 e^p(1,t)
 	\end{split}\label{eq:waveeq}
 \end{align}
 with physical parameters $\bp^q(\x)=\bp(\x)\tau_0$ and $\bp^p(\x)=\dfrac{1}{\bp(\x)\rho_0}$ and 
 \begin{align}
 	b(\x)=\begin{cases}
 		3\times 10^4 x^2(x-0.1)^2, & \forall \x\in[0,0.1],\\
 		0, & \text{otherwise}
 	\end{cases},
 \end{align} 
and total energy
\begin{align}
	\Ha(t)=\dfrac{1}{2}\int_{0}^1 \begin{bmatrix}
			e^q(\x,t)\\e^p(\x,t)
		\end{bmatrix}^\top\begin{bmatrix}
			\bp^q(\x) & 0\\0 & \bp^p(\x)
		\end{bmatrix}^{-1} \begin{bmatrix}
			e^q(\x,t)\\e^p(\x,t)
		\end{bmatrix} \d\x
\end{align}
 
 For simplicity, we choose $\bp(\x)=\dfrac{10-\x}{10}, \forall \x\in[0,1]$, as in the previous example. This implies that $\bp^q(\x)$ and $\bp^p(\x)$ satisfy the conditions (D1) of Theorem \ref{thm:2}, as shown in Section \ref{example:Piezo}, and the approximated model obtained from the mixed finite-element scheme described in Section \ref{sec:MFEM} conserves uniformly the exponential decay rate.
 
 Now consider the infinite-time linear quadratic (LQ) controller design for $u(t)$ with cost functional
 \begin{align}
 	J(u(t),e^p(\x,0),e^q(\x,0))
=&\int_{0}^{\infty}20\Ha(t)+10^{-3} \inner{u(t),u(t)} \d t\nonumber\\
=&\int_{0}^{\infty}10\inner{\begin{bmatrix}
		e^q(\x,t)\\e^p(\x,t)
	\end{bmatrix},\begin{bmatrix}
		\bp^q(\x) & 0\\0 & \bp^p(\x)
	\end{bmatrix}^{-1} \begin{bmatrix}
		e^q(\x,t)\\e^p(\x,t)
\end{bmatrix}}_{L^2}+10^{-3} \inner{u(t),u(t)} \d t.
 \end{align}
whose solution is $u(t)=-K\begin{bmatrix}
	e^q(\x,t)\\e^p(\x,t)
\end{bmatrix}$ with $\disp K(\cdot)=10^3 \int_{0}^{1}\begin{bmatrix}
0 & b(x)
\end{bmatrix}\Pi (\cdot) \d\x$ and $\Pi$ is the unique positive semi-definite solution to
a Riccati operator equation. Unfortunately, the solution to this Riccati operator equation cannot be explicitly calculated \cite[Section 4.2]{Mora2023} and in the practice the control is calculated using numerical approximation models.Then, based on the model described in Proposition \ref{prop:PHS}, the cost function is rewritten as 
\begin{align}
	J_N(u(t),\ev^p(0),\ev^q(0))
	=&\int_{0}^{\infty}20\Ha_d(t)+10^{-3} \inner{u(t),u(t)} \d t\nonumber\\
	=&\int_{0}^{\infty}10\inner{S_d\begin{bmatrix}
			\ev^q(t)\\\ev^p(t)
		\end{bmatrix},Q_d \begin{bmatrix}
			\ev^q(t)\\\ev^p(t)
	\end{bmatrix}}+10^{-3} \inner{u(t),u(t)} \d t.
\end{align}
subject to \eqref{eq:PHSd2}, with $B_d=\begin{bmatrix}
	0\\\bar{B}_p
\end{bmatrix}$ and $\bar{B}_p\in\mathbb{R}^N$ defined as $[\bar{B}_p]_i=\disp\int_{0}^1\omega_i b(\x)\d\x$. The solution
\begin{align}
	u(t)=-K_d  \begin{bmatrix}
		\ev^q(t)\\\ev^p(t)
	\end{bmatrix},\label{eq:LQcontroller}
\end{align}
  where $K_d=10^3 [0 \quad \bar{B}_p^\top]\Pi_d$ and $\Pi_d$ is the solution to the algebraic Riccati equation
\begin{align}
	(J_d-R_d)^\top \Pi_d S_d+ S_d^\top \Pi_d (J_d-R_d)- S_d^\top \Pi_d \begin{bmatrix}
		0 \\ \bar{B}_p
	\end{bmatrix}^\top \begin{bmatrix}
	0 \\ \bar{B}_p
\end{bmatrix}\Pi_d  S_d +S_d^\top Q_d=0,
\end{align}
is obtained using the function $\mathsf{icare}$ of MATLAB R2021.

\begin{figure}
	\begin{tabular}{cc}
		FE model & MFEM model\\
		\includegraphics[width=0.475\columnwidth]{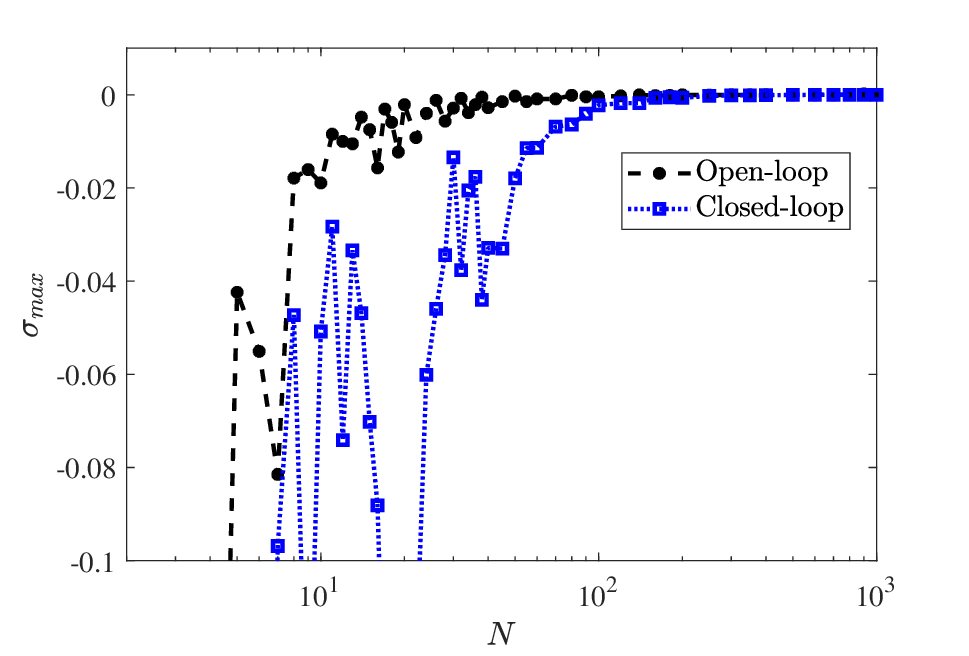}&
		\includegraphics[width=0.475\columnwidth]{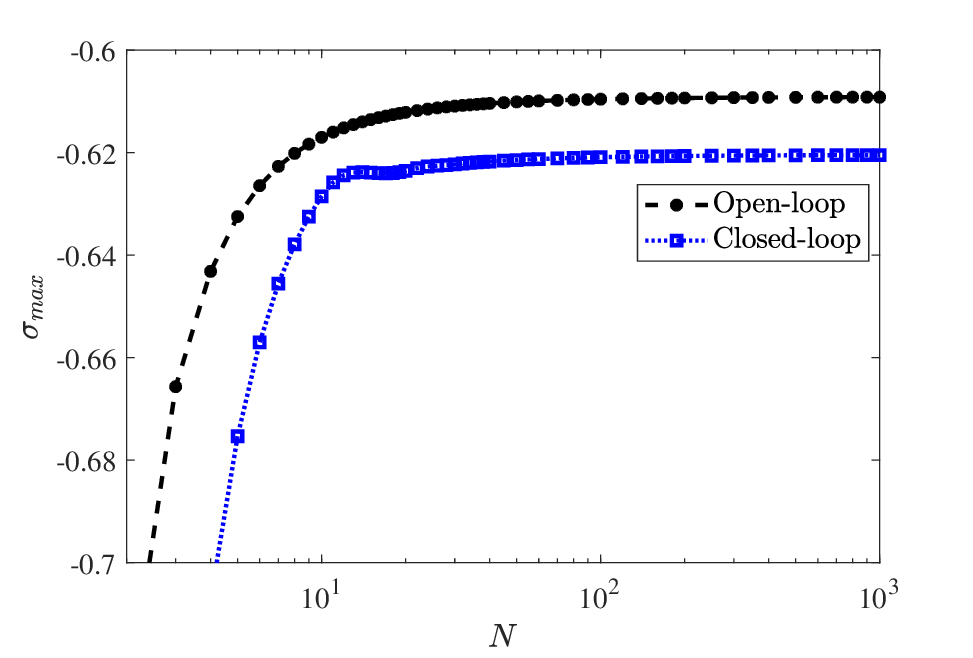}\\
		(a) & (b)
	\end{tabular}
	\caption{Maximum real part of the eigenvalues, $\sigma_{max}$, for the approximated model in open- and closed-loop, with $\rho_0=\tau_0=1$ and $\kappa_1=0.05$. (a): using the standard finite-element methods. (b): using the MFEM scheme proposed.}
	\label{fig:LQeigenvals}
\end{figure}

 Figure \ref{fig:LQeigenvals} shows a comparison of the maximum real part of the eigenvalues, $\sigma_{max}$, between the standard FE and the proposed MFEM approximation models of \eqref{eq:waveeq}, before and after closing the loop with the LQ controller \eqref{eq:LQcontroller}. The results are obtained assuming $\rho_0=\tau_0=1$ and $\kappa_1=0.5$. In Figure \ref{fig:LQeigenvals}.a we can observe how, for the FE model, $\sigma_{max}$ converges to 0, independently if the system is in open- or closed-loop, i.e., the system eigenvalues tends to the imaginary axis when $N$ increase, loosing the exponential stability, even with the LQ controller. For the proposed MFEM, there is a negative lower-upper bound for $\sigma_{max}$ for open-loop system, conserving the exponential stability uniformly, as proven analytically in Section \ref{sec:Multiplier}. The closed-loop system with the LQ controller presents the same behavior, slightly improving the exponential decay rate, as shown in Figure \ref{fig:LQeigenvals}.b.

 This behavior of the exponential stability will affect the convergence of the feedback gain $K_d$, as mentioned in \cite{Morris2020}. Express $K_d=h[\mathbf{k}_q \quad \mathbf{k}_p]$, the behavior of $\mathbf{k}_q$ and $\mathbf{k}_p$ for the FE model for different values of $N$ is shown in Figure \ref{fig:Kn}.a. Note that how when $N$ increase, the values of $\mathbf{k}_q$ and $\mathbf{k}_p$ diverge, i.e., $K_d\begin{bmatrix}
 	\ev^q(t)\\\ev^p(t)
 \end{bmatrix}$ diverges. On the other hand, for the MFEM model, we can see a clear convergence of $\mathbf{k}_q$ and $\mathbf{k}_p$ to some continuous function $k_q(\x)$ and $k_p(\x)$ at system nodes, i.e.,
\begin{align*}
	K_d\begin{bmatrix}
		\ev^q(t)\\\ev^p(t)
	\end{bmatrix}=h\left(\mathbf{k}_q\ev^q(t)+ \mathbf{k}_p \ev^p(t)\right) \quad \to \quad \int_{0}^1 k_q(\x)e^q(\x,t)+k_p(\x)e^p(\x,t) \d\x=K \begin{bmatrix}
	e^q(\x,t)\\e^p(\x,t)
\end{bmatrix}.
\end{align*}

Showing the convergence of $	K_d\begin{bmatrix}
	\ev^q(t)\\\ev^p(t)
\end{bmatrix} $ into $	K\begin{bmatrix}
e^q(\x,t)\\e^p(\x,t)
\end{bmatrix}$, as discussed in \cite{Morris2020,Morris2014}.
 
 \begin{figure}
 	\begin{tabular}{cc}
 		FE model & MFEM model\\
 		\includegraphics[width=0.5\columnwidth]{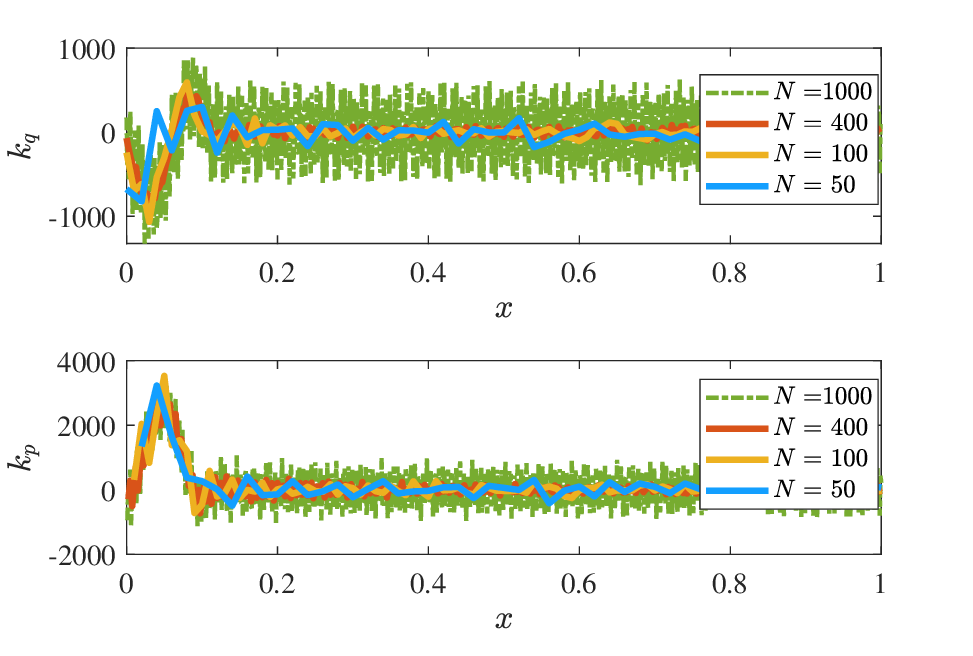}&
 		\includegraphics[width=0.5\columnwidth]{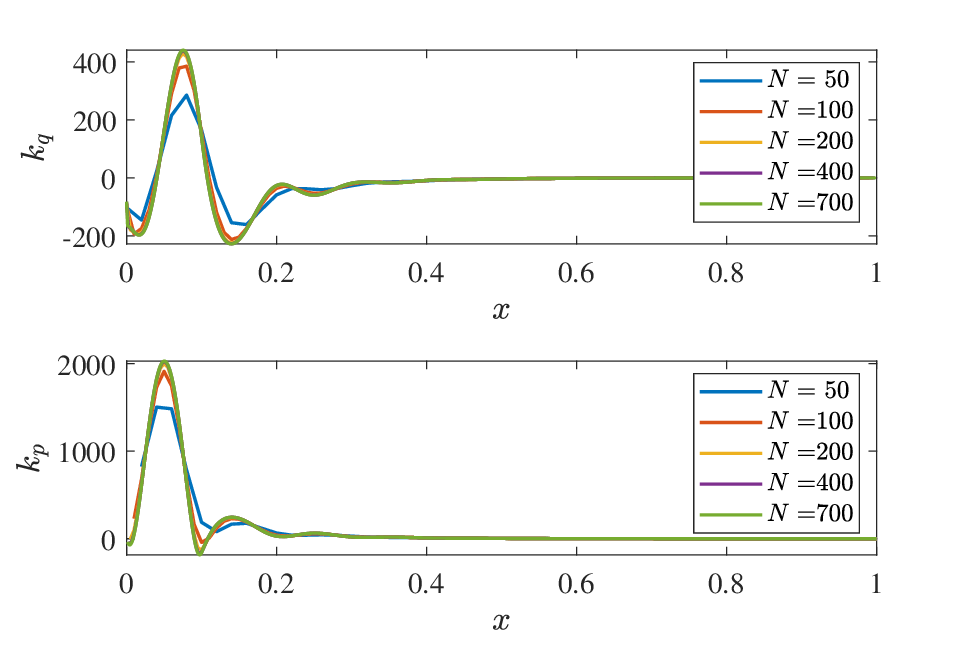}\\
 		(a) & (b)
 	\end{tabular}
 	\caption{ Behavior of  $K_d=h[\mathbf{k}_p \quad \mathbf{k}_q]$ for different values of $N$, considering the FE and MFEM model.}
 	\label{fig:Kn}
 \end{figure}

\section*{Appendix}

In this Section, we present a detailed proof of some Lemmas and Theorems.

\subsection{Proof of Lemma \ref{lem:4}.}

 Let $mspec(X)$ be the multi-spectrum of a square matrix $X$, that is,, the set consisting of the eigenvalues of $X$ including their algebraic multiplicity, and $spec(X)$ be the spectrum of $X$, that is,, the set consisting of the eigenvalues of $X$ ignoring algebraic multiplicity.    
    Since $\Ah=A\otimes I_N$ we obtain that 
    \begin{align*}
        \begin{bmatrix}
            0 & \Ah\\ \Ah^\top & 0
        \end{bmatrix}^{-2}= \begin{bmatrix}
            0 & A\\ A^\top & 0
        \end{bmatrix}^{-2}\otimes I_N= P_1^{-2} \otimes I_N
    \end{align*}

    According to [Bernstain2019, Proposition 9.1.10], $mspec (P_1^{-2}\otimes I_N)=\{\lambda \mu: \lambda\in mspec (P_1^{-2}), \mu \in mspec (I_N)\}$. Note that, in this particular case, $\mu$ always equals $1$. This implies that $spec(P_1^{-2} \otimes I_N)=spec(P_1^{-2})$. Moreover, all $P_1^{-2}$ eigenvalues are real. As a consequence,  $\max eig\left(\begin{bmatrix}
            0 & \Ah\\ \Ah^\top & 0
        \end{bmatrix}^{-2}\right)= \max eig\left(P_1^{-2}\otimes I_N\right)=\max eig(P_1^{-2})$.

    On the other hand, consider the vector $\boldsymbol{f}=[f_1, \dots, f_N]^\top\in\mathbb{R}^N$, the $j$-th element of product $W\boldsymbol{f}$ is 
    $$ \left[W\boldsymbol{f}\right]_j=\begin{cases}
       -\dfrac{h}{4} (f_1+f_2), & j=1\\
        -\dfrac{h}{4} \left((j-1)(f_{j-1}+f_j)+j (f_j +f_{j+1}) \right), &j=2\dots N-1\\
        -\dfrac{h}{4} \left((N-1)(f_{N-1}+f_N)+2N f_N\right), & j=N
    \end{cases}.$$
     
     Using $(a+b)^2\leq a^2+b^2+2|ab|\leq 2a^2+2b^2$ we have
\begin{align*}
    \|W\boldsymbol{f}\|^2=&\dfrac{h^2}{16} \left( (f_1+f_2)^2 + \left((N-1) (f_{N-1}+f_N)+2 N f_N\right)^2+ \sum_{j=2}^{N-1} \left((j-1)(f_{j-1}+f_j) +j(f_j+f_{j+1}) \right)^2\right)\\
    \leq& \dfrac{h^2}{16} \left( (f_1+f_2)^2 +2(N-1)^2(f_{N-1}+f_N)^2+8N^2 f_N^2 +2 
\sum_{j=2}^{N-1} (j-1)^2(f_{j-1}+f_j)^2 +j^2(f_j+f_{j+1})^2  \right).
\end{align*}

Since $\disp \sum_{j=2}^{N-1} (j-1)^2(f_{j-1}+f_j)^2 +j^2(f_j+f_{j+1})^2 
= (f_1+f_2)^2+ (N-1)^2(f_{N-1}+f_N)^2 +2\sum_{j=2}^{N-2} j^2(f_{j}+f_{j+1})^2$, then
\begin{align*}
     \|W\boldsymbol{f}\|^2 \leq & \dfrac{h^2}{16} \left( 3 (f_1+f_2)^2 +4 (N-1)^2(f_{N-1}+f_N)^2 +8 N^2 f_N^2 + 4\sum_{j=2}^{N-2} j^2(f_{j}+f_{j+1})^2 \right)\\
    \leq & \dfrac{h^2}{16} \left( 6 f_1^2+  8\sum_{j=2}^{N} \left(j^2+(j-1)^2\right)f_{j}^2 \right) \\
    \leq&\dfrac{h^2 N^2}{16} \left( 6 f_1^2+  16\sum_{j=2}^{N} f_{j}^2 \right)\\
    \leq&  \ell^2 \|\boldsymbol{f}\|^2.
\end{align*}

Consequently, for any vector $\boldsymbol{g}=[\boldsymbol{g}_1^\top, \dots, \boldsymbol{g}_n^\top]^\top$ with $\boldsymbol{g}_j\in\mathbb{R}^N$, $\forall j\in\{1,\dots,n\}$, we obtain that $\disp\|\Wh\boldsymbol{g}\|^2=\sum_{j=1}^{n} \|W \boldsymbol{g}_j\|^2\leq \ell^2 \sum_{j=1}^{n} \| \boldsymbol{g}_j\|^2= \ell^2 \|\boldsymbol{g}\|^2 $, completing the proof.

\subsection{Proof of Lemma \ref{lem:inner}.}
Denotes by $[V\boldsymbol{f}]_j$ the $j$-th element of the product between a matrix $V$ and vector $\boldsymbol{f}$. Consider matrices $C$, $M$, $D$, and $\tr$ defined at \eqref{eq:MDtr} and \eqref{eq:C}.
Defining $\bpi_k=1/\bp_k$ and $\boldsymbol{f}=[f_1,\dots, f_N]^\top$, we have that $[M\widetilde{Q}^{-1}M^\top \boldsymbol{f}]_1=\dfrac{\bpi_1 f_1}{4}+\dfrac{\bpi_2(f_1+f_2)}{4}$,  $[M\widetilde{Q}^{-1}M^\top \boldsymbol{f}]_j= \dfrac{\bpi_{j} (f_{j-1}+f_j)}{4}  +\dfrac{\bpi_{j+1}(f_j+f_{j+1})}{4}$ for every $j\in\{2,\dots,N-1\}$, and $[M\widetilde{Q}^{-1}M^\top \boldsymbol{f}]_N= \dfrac{\bpi_{N} (f_{N-1}+f_N)}{4} $.  Similarly, $[C\boldsymbol{f}]_1=\dfrac{f_2}{2}$, $[C\boldsymbol{f}]_j=\dfrac{j(f_{j+1}-f_{j-1})}{2}$ for every $j\in\{2,\dots,N-1\}$, and $[C\boldsymbol{f}]_N=N(f_N-f_{N-1})$.
  Then, the inner product $ h\inner{ M \widetilde{\L}^{-1} M^\top \boldsymbol{f},C \boldsymbol{f}}$ is expressed as
    \begin{align*}
    h\inner{ M \widetilde{\L}^{-1} M^\top \boldsymbol{f},C \boldsymbol{f}}=& \dfrac{h}{8} \left(\bpi_{1} f_1 +\bpi_{2}(f_1+f_2)\right)f_2+\dfrac{hN}{4} \bpi_{N}(f_N+f_{N-1})(f_{N}-f_{N-1})\\
    &+\sum_{j=2}^{N-1}\dfrac{h j}{8} \left[\bpi_{j} (f_{j-1}+f_{j})+\bpi_{j+1}(f_j+f_{j+1})\right](f_{j+1}-f_{j-1})
\end{align*}

Expressing $f_{j+1}-f_{j-1}$ as $(f_{j}+f_{j+1})-(f_{j}+f_{j-1})$ we obtain $\disp\sum_{j=2}^{N-1} j \left[\bpi_{j} (f_{j-1}+f_{j})+\bpi_{j+1}(f_j+f_{j+1})\right](f_{j+1}-f_{j-1})=-\sum_{j=2}^{N}\bpi_{j}(f_{j-1}+f_j)^2- \bpi_{2} (f_1+f_2)^2
    +N\bpi_{N}(f_{N-1}+f_N)^2 +\sum_{j=2}^{N-1}j(\bpi_{j}-\bpi_{j+1})(f_{j}+f_{j-1})(f_{j}+f_{j+1})$.
Similarly, note that $ \bpi_{N}(f_N+f_{N-1})(f_{N}-f_{N-1})= 2\bpi_{N}(f_N+f_{N-1})f_N-\bpi_{N}(f_N+f_{N-1})^2$ and $ \left(\bpi_{1} f_1 +\bpi_{2}(f_1+f_2)\right)f_2= -\bpi_{1}f_1^2+(\bpi_{1}-\bpi_{2})(f_1+f_2)f_1+\bpi_{2}(f_1+f_2)^2$. This implies that
\begin{align*}
    h\inner{ M \widetilde{\L}^{-1} M^\top \boldsymbol{f},C \boldsymbol{f}}=& \dfrac{h}{8}\left[-\bpi_{1}f_1^2  -\sum_{j=2}^{N}\bpi_{j}(f_{j-1}+f_j)^2  + 4N\bpi_{N}(f_N+f_{N-1})f_N-N\bpi_{N}(f_{N-1}+f_{N})^2 \right.\\
    & \left. + (\bpi_{1}-\bpi_{2})(f_1+f_2)f_1
    +\sum_{j=2}^{N-1}j(\bpi_{j}-\bpi_{j+1})(f_{j}+f_{j-1})(f_{j}+f_{j+1})\right]\\
    =&-\dfrac{h}{2}\boldsymbol{f}^\top M \widetilde{\L}^{-1} M^\top \boldsymbol{f}+\dfrac{h}{2}\boldsymbol{f}^\top M \widetilde{\L}^{-1} O \widetilde{\L}^{-1}M^\top \boldsymbol{f}\\
    & +\dfrac{h}{8 }\left[4N\bpi_{N}(f_N+f_{N-1})f_N-N\bpi_{N}(f_{N-1}+f_{N})^2 \right],
\end{align*}
since $\bpi_{j}-\bpi_{j+1}=\bpi_j(\bp_{j+1}-\bp_{j})\bpi_{j+1}$.
Finally, using the Young's inequality $4N\bpi_{N}(f_N+f_{N-1})f_N\leq  N\bpi_{N} (f_N+f_{N-1})^2+ 4N\bpi_{N} f_N^2$ we obtain
\begin{align*}
    h\inner{ M \widetilde{\L}^{-1} M^\top \boldsymbol{f},C \boldsymbol{f}}
    =&-\dfrac{h}{2}\boldsymbol{f}^\top M \widetilde{\L}^{-1} M^\top \boldsymbol{f} +\dfrac{h}{2} \boldsymbol{f}^\top M \widetilde{\L}^{-1} O \widetilde{\L}^{-1} M^\top \boldsymbol{f} +\dfrac{h N}{2}\bpi_{N} f_N^2\\
    =&-\dfrac{h}{2} \boldsymbol{f}^\top M \widetilde{\L}^{-1}\left(\widetilde{Q}- O\right) \widetilde{\L}^{-1} M^\top \boldsymbol{f} +\dfrac{h N}{2}\bpi_{N} f_N^2+\dfrac{\ell \bpi_N}{2} \boldsymbol{f}^\top \tr\tr^\top \boldsymbol{f}.
\end{align*}


On the other hand, note that
\begin{align*}
    -\dfrac{1}{h}\inner{D^\top \widetilde{\L} D \boldsymbol{f},C \boldsymbol{f}}=&-\dfrac{1}{2h} \left[ \left(\bp^q_{1}f_1-\bp^q_{2}(f_2-f_1)\right)f_2  +\sum_{j=2}^{N-1} j[\bp_{j}(f_j-f_{j-1})-\bp_{j+1}(f_{j+1}-f_j)](f_{j+1}-f_{j-1})\right]\\
    & -\dfrac{N}{h} \bp_{N}(f_N-f_{N-1})^2.
    \end{align*} 

Given  $\disp\sum_{j=2}^{N-1} j\left[\bp_{j}(f_j-f_{j-1})-\bp_{j+1}(f_{j+1}-f_j)\right](f_{j+1}-f_{j-1}) = \bp_{2}(f_2-f_2)^2 -N\bp_{N}(f_N-f_{N-1})^2+\sum_{i=2}^{N}\bp_{j}\left(f_{j}-f_{j-1}\right)^2 \newline -\sum_{j=2}^{N-1} j(\bp_{j+}-\bp_{j}) (f_j-f_{j-1}) (f_{j+1}-f_j)$ and
$\left(\bp_{1}f_1-\bp_{2}(f_2-f_1)\right)f_2=\bp_{1}f_1^2-(\bp_{2}-\bp_{1})(f_2-f_1)f_1-\bp_{2}(f_2-f_1)^2$

we have that
\begin{align*}
    -\dfrac{1}{h}\inner{D^\top \widetilde{\L} D \boldsymbol{f},C \boldsymbol{f}}=&-\dfrac{1}{2h} \left[ \bp_{1}f_1^2 +\sum_{i=2}^{N}\bp_{j}(f_{j}-f_{j-1})^2  +N \bp_{N}(f_N-f_{N-1})^2 \right.\\
    &\left.-(\bp_{2}-\bp_{1})(f_2-f_1)f_1-\sum_{j=2}^{N-1} j(\bp_{j+1}-\bp_{j}) (f_j-f_{j-1}) (f_{j+1}-f_j)\right]\\
    =&-\dfrac{1}{2h}\boldsymbol{f}^\top D^\top \widetilde{\L} D\boldsymbol{f}+\dfrac{1}{2h}\boldsymbol{f}^\top D^\top O D\boldsymbol{f}-\dfrac{N\bp_{N}}{2h}(f_{N}-f_{N-1})^2\\
    =& -\dfrac{1}{2h}\boldsymbol{f}^\top D^\top \left(\widetilde{\L}-O\right) D\boldsymbol{f}-\dfrac{N\bp_N}{2h}\boldsymbol{f}^\top D^\top \tr\tr^\top D\boldsymbol{f},
\end{align*}
completing the proof.

\bibliographystyle{siam}
\bibliography{references.bib}

\end{document}